\numberwithin{equation}{section} \numberwithin{figure}{section}
\newtheorem{thm}{Theorem}[section]
\newtheorem{lm}[thm]{Lemma}
\theoremstyle{definition}
\newtheorem{notation}[thm]{Notation}
\newtheorem{pr}[thm]{Proposition}
\theoremstyle{definition}
\newtheorem{df}[thm]{Definition}
\theoremstyle{definition}
\newtheorem{rem}[thm]{Remark}
\newcommand{\Rn}{\mathbb{R}^{n}}
\newcommand{\hn}{\mathbb{R}^{n-1}}
\newcommand{\R}{\mathbb{R}}
\newcommand{\N}{\mathbb{N}}
\renewcommand{\hn}{\mathbb{H}^n}
\newcommand{\C}{\mathbb{C}}
\newcommand{\ha}{\mathcal{H}}
\newcommand{\h}{\mathbb{H}}
\renewcommand{\ss}{\mathcal{S}}
\newcommand{\s}{\sigma}
\newcommand{\cu}{\overline{U}}
\newcommand{\stm}{\setminus}
\renewcommand{\o}{\Omega}
\newcommand{\D}{\Delta}
\newcommand{\anah}{\nabla_\h}
\newcommand{\divh}{\di_\h}
\newcommand{\g}{\gamma}
\newcommand{\G}{\Gamma}
\renewcommand{\dh}{\Delta_\h}
\newcommand{\de}{\delta}
\newcommand {\grtrsim} {\ {\raise-.5ex\hbox{$\buildrel>\over\sim$}}\ }
\newcommand{\e}{\varepsilon}
\newcommand{\ra}{\rightarrow}
\newcommand{\khii}{\text{\lower -.4ex\hbox{$\chi$}}}
\DeclareMathOperator{\spt}{spt} \DeclareMathOperator{\dist}{dist}
 \DeclareMathOperator{\diam}{d}
 \DeclareMathOperator{\di}{div}
\renewcommand{\t}{\tau}
\newcommand{\f}{\varphi}
\renewcommand{\a}{\alpha}
\begin{document}
\title [singular integrals, self-similar sets and removability in $\hn$]{singular integrals on self-similar sets and removability for lipschitz harmonic functions in Heisenberg groups}
\author{Vasilis Chousionis and Pertti Mattila}

\thanks{Both authors were supported by the Academy of Finland.} \subjclass[2000]{Primary 42B20,28A75} \keywords{Singular integrals, self-similar sets, removability, Heisenberg group}

\address{Vasilis Chousionis.  Departament de
Ma\-te\-m\`a\-ti\-ques, Universitat Aut\`onoma de Bar\-ce\-lo\-na, Spain}

\email{chousionis@mat.uab.cat}

\address{Pertti Mattila, Department of Mathematics and Statistics,
P.O. Box 68,  FI-00014 University of Helsinki, Finland}

\email{pertti.mattila@helsinki.fi}

\begin{abstract}
In this paper we study singular integrals on small (that is, measure zero and lower than full dimensional) subsets of 
metric groups. The main examples of the groups we have in mind are Euclidean spaces and Heisenberg groups. In addition 
to obtaining results in a very general setting, the purpose 
of this work is twofold; we shall extend some results in Euclidean spaces to more general kernels than previously 
considered, and we shall obtain in Heisenberg groups some applications to harmonic (in the Heisenberg sense) functions of 
some results known earlier in Euclidean spaces.

\end{abstract}

\maketitle

\section{Introduction}

The Cauchy singular integral operator on one-dimensional subsets of the complex plane has been studied extensively for 
a long time with many applications to analytic functions, in particular to analytic capacity and removable sets of 
bounded analytic functions. There have also been many investigations of the same kind for the Riesz singular 
integral operators with the kernel $x/|x|^{-m-1}$ on $m$-dimensional subsets of $\R^n$. One of the general themes has 
been that boundedness properties of these singular integral operators imply some geometric regularity properties of the 
underlying sets, see, e.g., \cite{DS}, \cite{M}, \cite{M4}, \cite{Pa}, \cite{tre} and \cite{mro}. Standard self-similar Cantor sets have often served as examples where such results were first 
established. This tradition was started by Garnett in \cite{g} and Ivanov in \cite{i} who used them as examples of removable sets for bounded analytic functions with positive length. 
Later studies of such sets include \cite{gb}, \cite{j}, \cite{jm}, \cite{i2}, \cite{M2}, \cite{mtv}, \cite{mtv2} and \cite{gv} in connection with the Cauchy integral in the complex plane, \cite{mt} and \cite{t2} in connection with the Riesz transforms in higher dimensions, and \cite{dc} and \cite{C} in connection with other kernels. In this paper we shall first derive criteria for the unboundedness 
of very general singular integral operators on self-similar subsets of metric groups with dilations and then 
give explicit examples in Euclidean spaces and Heisenberg groups on which these criteria can be checked. 

Today quite complete results are known for the Cauchy integral and for the removable sets of bounded analytic functions. 
The new progress started from Melnikov's discovery in \cite{me} of the relation of the Cauchy kernel to the so-called
Menger curvature. This relation was applied by Melnikov and Verdera in \cite{mv} to obtain a simple proof of the 
$L^2$-boundedness of the Cauchy singular integral operator on Lipschitz graphs, and in \cite{MMV} in order to 
get geometric characterizations of those Ahlfors-David regular sets on which the Cauchy singular integral operator
is $L^2$-bounded and of those which are removable for bounded analytic functions. This progress culminated in 
David's characterization in \cite{d} of removable sets of bounded analytic functions among sets of finite length as those 
which intersect every rectifiable curve in zero length, and in Tolsa's complete Menger curvature integral 
characaterization in \cite{To} of all removable sets of bounded analytic functions. Much less known is known in higher 
dimensions for the Riesz transforms and removable sets for Lipschitz harmonic functions, for some results, 
see e.g. \cite{mpa}, \cite{M4}, \cite{V}, \cite{l}, \cite{vo}, \cite{tr}, \cite{t2} and \cite{env}. There are various reasons why the 
Lipschitz harmonic functions are a natural class to study, one of them is that by Tolsa's result in the plane 
the removable sets for bounded analytic and Lipschitz harmonic functions are exactly the same. Also the analog 
for the Lipschitz harmonic functions of the above mentioned David's result for sets of finite length was first proved in 
\cite{dm}.   

In \cite{CM} analogs of the results in \cite{mpa} and \cite{M4} were proven in Heisenberg groups for Riesz-type kernels.
They imply in particular that the operators are unbounded on many self-similar fractals. 
An unsatisfactory feature is that these kernels are not related to any natural function classes in Heisenberg groups 
in the same way as the Riesz kernels are related to harmonic functions in $\Rn$. This is one of the main reasons why we wanted 
to study more general kernels in this paper. Our kernels are now such that they include the (horizontal) gradient 
of the fundamental solution of the sub-Laplacian (or Kohn-Laplacian) operator which is exactly what is needed for the 
applications to the related harmonic functions. For many other recent developments on potential theory related to 
sub-Laplacians, see \cite{blu} and the references given there. 

We shall now give a brief description of the main results of the paper. In Section 2 we study a general metric group 
$G$ which is equipped with natural dilations $\delta_r:G \ra G, r>0$. All Carnot groups are such. For a kernel 
$K: G\times G \setminus \{(x,y):x=y\}\ra \R$  and a finite Borel measure $\mu$ on $G$ the maximal singular integral 
operator $T_K^*$ is defined by 
$$T_K^*(f)(x)=\sup_{\e>0}\left|\int_{G \setminus B(x,\e)} K(x,y)f(y)d\mu y\right|.$$
Suppose that $C=\bigcup_{i=1}^N S_i (C)$ is a self-similar Cantor set generated by the similarities 
$S_i=\t_{q_i}\circ \delta_{r_i}, i=1,\dots,N$, where $\t_{q_i}$ is the left translation by $q_i \in G$ and $r_i \in (0,1)$. 
Let $s>0$ be the Hausdorff dimension of $C$ and suppose that the kernel 
$K:=K_\o$ is $s$-homogeneous:
$$K_\o (x,y)=\frac{\o(x^{-1}\cdot y)}{d(x,y)^s},\ x,y \in G \setminus \{(x,y):x=y\},$$
where $\o: G \ra \R$  is a continuous and homogeneous function of degree zero, that is, 
$$\o(\delta_r(x))=\o(x)\ \text{for all} \ x\in G,r>0.$$
In Theorem \ref{unb} we prove that if there exists a fixed point $x$ for some iterated map 
$S_w:=S_{i_1}\circ \dots \circ S_{i_n}$ such that 
$$\int_{C \setminus S_w(C)} K_\o (x,y) d \mathcal{H}^s y \neq 0$$
then $T^*_{K_\o}$ is unbounded in $L^2(\mathcal{H}^s \lfloor C)$, where $\mathcal{H}^s \lfloor C$ is the restriction of the $s$-dimensional Hausdorff measure $\ha^s$ to $C$. We shall give a simple example in the plane where this 
criterion can be applied.

In Section 3 we shall study in the Heisenberg group $\hn$ removable sets for the $\dh$-harmonic functions, 
that is, solutions of the sub-Laplacian 
equation $\dh f=0$. As in the classical case in $\R^n$, see \cite{ca}, the removable sets for the bounded $\dh$-harmonic functions 
can be characterized as polar sets or the null-sets of a capacity with the critical Hausdorff dimension $Q-2$ where 
$Q=2n+2$ is the Hausdorff dimension of $\hn$, see Remark 13.2.6 of \cite{blu}. We shall verify in Theorem \ref{pos} that the critical dimension for the Lipschitz $\dh$-harmonic functions is $Q-1$, in accordance with the classical case, by proving that for a compact subset $C$ of $\hn$, $C$ is removable, if $\mathcal{H}^{Q-1}(C)=0$, and $C$ is not removable,  
if the Hausdorff dimension $\dim C>Q-1$. For this and the later applications to self-similar sets, we need a representation theorem for Lipschitz functions which are $\dh$-harmonic outside a compact set $C$ with finite $(Q-1)$-dimensional Hausdorff measure. This is given in Theorem \ref{main} and it tells us that such a function can be written in a neighborhood of $C$ 
as a sum of a $\dh$-harmonic function and a potential whose kernel is the fundamental solution of $\dh$. Finally in Section 4 we present a family of self-similar Cantor sets with positive and finite $(Q-1)$-dimensional Hausdorff measure which are 
removable for Lipschitz $\dh$-harmonic functions.  

Throughout the paper we will denote by $A$ constants which
may change their values at different occurrences, while constants
with subscripts will retain their values. We remark that as usual the notation $x \lesssim y$ means $x \lesssim A y$ for some constant $A$ depending only on structural constants, that is, the dimension $n$, the regularity constants of certain measures and the constant arising from the global equivalence of the metrics $d$ and $d_c$ defined in Section 3.

We would like to thank the referee for many useful comments. 

\section{Singular Integrals on self-similar sets of metric groups}

Throughout the rest of this section we assume, as in \cite{ms}, that $(G,d)$ is a complete separable metric group with the following properties:
\begin{enumerate}
\item The left translations $\tau_q:G \ra G$, 
$$ \tau_q(x)= q\cdot x,x \in G,$$
are isometries for all $q \in G.$
\item There exist dilations $\delta_r:G \ra G, r>0,$
which are continuous group homomorphisms for which,
\begin{enumerate}
\item $\delta_1=$ identity,
\item $d(\delta_r(x),\delta_r(y))=rd(x,y)$ for $x,y \in G,r>0$,
\item $\delta_{rs}=\delta_r \circ \delta_s$.
\end{enumerate}
It follows that for all $r>0$, $\delta_r$ is a group isomorphism with $\delta_r^{-1}=\delta_{\frac{1}{r}}$.
\end{enumerate}
The closed and open balls with respect to $d$ will 
be denoted by $B(p,r)$ and $U(p,r)$. By $\diam(E)$ we will denote the diameter of $E \subset G$ with respect to the metric $d$.

We denote by $\mathcal{H}^s,s\geq 0,$ the $s$-dimensional Hausdorff measure obtained from the metric $d$, i.e. for $E \subset G$ and $\delta >0$, $\mathcal{H}^s (E)=\sup_{\delta>0} \mathcal{H}^s_\delta (E)$, where
$$\mathcal{H}^s_\delta(E)=\inf \left\{\sum_i  \diam(E_i)^s: E \subset \bigcup_i E_i,\diam (E_i)<\delta \right\}.$$
In the same manner the $s$-dimensional spherical Hausdorff measure for $E \subset G$ is defined as $\ss^s (E)=\sup_{\delta>0} \ss^s_\delta (E)$, where
$$\ss^s_\delta(E)=\inf \left\{\sum_i  r^s_i: E \subset \bigcup_i B(p_i,r_i),r_i \leq \delta,p_i \in G \right\}.$$
Translation invariance and homogeneity under dilations of the Hausdorff measures follows as usual, therefore for $A \subset G, \ p \in G$  and $s,r \geq 0$,
$$\mathcal{H}^s (\t_p(A))=\mathcal{H}^s(A) \ \text{and} \ \mathcal{H}^s (\delta_r(A))=r^s \mathcal{H}^s(A)$$  and the same relations hold for the spherical Hausdorff measures as well.

Let $\mu$ be a finite Borel measure on $G$ and let a Borel measurable $K: G\times G \setminus \{(x,y):x=y\}\ra \R$ be a kernel which is bounded away from the diagonal, i.e., $K$ is bounded in $\{(x,y):d(x,y)>\delta\}$ for all $\delta>0$.
The truncated singular integral operators associated to $\mu$ and $K$ are defined for $f\in L^1(\mu)$ and $\e>0$ as,
$$T_\e(f)(y)=\int_{G \setminus B(x,\e)} K(x,y)f(y)d\mu y,$$
and the maximal singular integral is defined as usual,
$$T_K^*(f)(x)=\sup_{\e>0}|T_\e(f)(x)|.$$

We are particularly interested in the following class of kernels.
\begin{df}
\label{hom} For $s>0$ the $s$-homogeneous kernels are of the form,
$$K_\o (x,y)=\frac{\o(x^{-1}\cdot y)}{d(x,y)^s},\ x,y \in G \setminus \{(x,y):x=y\},$$
where $\o: G \ra \R$  is a continuous and homogeneous function of degree zero, that is, 
$$\o(\delta_r(x))=\o(x)\ \text{for all} \ x\in G,r>0.$$
\end{df}

In the classical Euclidean setting homogeneous kernels have been studied widely, see e.g. \cite{Gr}. The Hilbert transform in the line, the Cauchy transform in the complex plane and the Riesz transforms in higher dimensions are the best known singular integrals associated to homogeneous kernels. In \cite{h} Huovinen studied general one-dimensional homogeneous kernels in the plane.  

In $\Rn$ the lower dimensional coordinate $s$-Riesz kernels,
$$R_s^i(x,y)=\frac{x_i-y_i}{|x-y|^{s+1}},s \in (0,n), i=1,\dots,n,$$
are often studied in conjunction with Ahlfors-David regular measures:
\begin{df}\label{AD}
A Borel measure $\mu$ on a metric space $X$ is Ahlfors-David regular, or 
AD-regular, if for some positive numbers $s$ and $A$,
$$r^s/A \leq \mu(B(x,r)) \leq Ar^s\ \text{for all}\ x\in \spt\mu, 0<r<\diam(\spt\mu),$$
where $\spt\mu$ stands for the support of $\mu$.
\end{df}

The related central open question in $\Rn$ asks if the $L^2(\mu)$-boundedness of the $s$-Riesz transforms, $s \in \N \cap [1,n)$, forces the support of $\mu$ to be $s$-uniformly rectifiable or even simply $s$-rectifiable. In the case of $s=1$ it was answered positively in \cite{MMV}, and it remains an open problem for $s>1$. It originates from the fundamental work of David and Semmes, see e.g. \cite{DS}, and it can be heuristically understood in the following sense: 

Does the $L^2(\mu)$-boundedness of Riesz transforms impose a certain geometric regularity on the support of $\mu$? 

In order to achieve a better understanding for this problem, it is very natural to examine the behavior of Riesz transforms on fractals like self-similar sets. This is because although geometrically irregular they retain some structure. It should be expected that $s$-Riesz transforms cannot be bounded on typical self similar sets. Indeed this is the case as follows by results proved in \cite{M4} and \cite{V}. In \cite{CM} it was shown that an analogous result holds true even in the setting of Heisenberg groups. On the other hand it is not known if singular integrals associated to more general $s$-homogeneous kernels are unbounded on $s$-dimensional self-similar sets. In this direction Theorem \ref{unb} provides one criterion for unboundedness for homogeneous singular integrals valid in the general setting of this section. 

Let $\mathcal{S}=\{S_1,\dots,S_N\},N \geq 2,$  be an iterated function system (IFS) of similarities of the
form
\begin{equation}
\label{gsim}S_i=\t_{q_i}\circ \delta_{r_i}
\end{equation}
where $q_i \in G,r_i \in (0,1)$ and $i=1,\dots,N$. The self-similar set $C$ is the invariant set
with respect to $\mathcal{S}$, that is, the unique non-empty compact set such that
$$C=\bigcup_{i=1}^N S_i (C).$$
The invariant set $C$ will be called a separated self-similar set whenever the sets $S_i (C)$ are pairwise disjoint for $i=1,\dots,N$. It follows by a general result of Schief in \cite{s} that separated Cantor sets satisfy
$$0<\mathcal{H}^d(C)< \infty \ \text{ for } \ \sum_{i=1}^N r_i^d=1,$$
and the measure $\mathcal{H}^d \lfloor C$ is $d$-AD regular.

We denote by $\mathcal{I}$ the set of all finite words $w=(i_1,\dots,i_n) \in \{1,\dots,N\}^n$ with $n \geq 0$. Given any word $w=(i_1,\dots,i_n) \in \mathcal{I}$ its length is denoted by $|w|=n$ and for $m \leq n$, $w|_{m}=(i_1,\dots,i_m)$. We also adopt the following conventions:
$$S_w:=S_{i_1}\circ \dots \circ S_{i_n}\ \text{and} \ C_w=S_w(C).$$ The fixed points of $\mathcal{S}$ are exactly those $x \in C$ such that $S_w(x)=x$ for some $w \in \mathcal{I}$. In this case
$$\{x\}= \bigcap_{k=1}^\infty S_{w^k} (C)$$
where $|w^k|=k|w|$ and $w^k=(i_1,\dots,i_n,\dots\dots,i_1,\dots,i_n).$

\begin{thm}
\label{unb}
Let $\mathcal{S}=\{S_1,\dots,S_N\}$ be an iterated function system in $G$ generating a separated $s$-dimensional self-similar set $C$ and let $K_\Omega$ be an $s$-homogeneous kernel. If there exists a fixed point $x$ for $\mathcal{S}$,
$$\{x\}= \bigcap_{k=1}^\infty S_{w^k} (C),$$
such that
$$\int_{C \setminus C_w} K_\o (x,y) d \mathcal{H}^s y \neq 0,$$
then the maximal operator $T^*_{K_\o}$ is unbounded in $L^2(\mathcal{H}^s \lfloor C)$, moreover $\|T^\ast_{K_\o}(1)\|_{L^\infty(\mathcal{H}^s \lfloor C)}=\infty$.

\end{thm}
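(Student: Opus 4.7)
The plan is to exploit the self-similarity of $C$ and the $s$-homogeneity of $K_\Omega$ to construct scales $\epsilon_m\downarrow 0$ along which $T_{\epsilon_m}(1)$ grows linearly in $m$, first at the fixed point $x$ and then uniformly over the piece $C_{w^m}$, which has positive $\mathcal{H}^s$-measure. Since $S_w(x)=x$ and $S_w=\tau_{q_w}\circ\delta_r$ with $r:=r_{i_1}\cdots r_{i_n}$, one can rewrite $S_w(y)=x\cdot\delta_r(x^{-1}\cdot y)$, so that $x^{-1}\cdot S_w(y)=\delta_r(x^{-1}\cdot y)$ and $d(x,S_w(y))=r\,d(x,y)$. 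Zero-homogeneity of $\Omega$ then gives $K_\Omega(x,S_w(y))=r^{-s}K_\Omega(x,y)$, and combined with the scaling $\mathcal{H}^s(S_w(A))=r^s\mathcal{H}^s(A)$, the change of variables $z=S_{w^k}(y')$ yields the key self-similar identity
\begin{equation*}
\int_{C_{w^k}\setminus C_{w^{k+1}}}K_\Omega(x,z)\,d\mathcal{H}^s z \;=\; \int_{C\setminus C_w}K_\Omega(x,y)\,d\mathcal{H}^s y \;=\; I\qquad(k\ge 0).
\end{equation*}

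Next, since $C$ is separated, $\eta:=\dist(x,C\setminus C_w)>0$, and upon replacing $w$ by a power $w^L$ (legitimate, as this only changes $I$ to $LI\ne 0$) one may assume $r\,\diam(C)<\eta/2$. For any $\epsilon_m\in(r^m\diam(C),\,r^{m-1}\eta)$ one then has $B(x,\epsilon_m)\cap C=C_{w^m}$, and the disjoint decomposition $C\setminus C_{w^m}=\bigsqcup_{k=0}^{m-1}(C_{w^k}\setminus C_{w^{k+1}})$ combined with the identity gives $T_{\epsilon_m}(1)(x)=mI$, whence $T^*_{K_\Omega}(1)(x)=\infty$. To promote this blow-up to a set of positive $\mathcal{H}^s$-measure, I will compare $T_{\epsilon_m}(1)(y)$ with $T_{\epsilon_m}(1)(x)$ for $y\in C_{w^m}$. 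Writing $y=S_{w^m}(y')$ and $z=S_{w^k}(z')$ with $z'\in C\setminus C_w$, the homomorphism property of the dilations yields
\begin{equation*}
y^{-1}\cdot z \;=\; \delta_{r^k}\!\bigl(\delta_{r^{m-k}}((y')^{-1}\cdot x)\cdot x^{-1}\cdot z'\bigr),
\end{equation*}
which differs from $\delta_{r^k}(x^{-1}\cdot z')=x^{-1}\cdot z$ by a factor at $d$-distance $O(r^{m-k})$ from the identity. Uniform continuity of $v\mapsto\Omega(v)/d(e,v)^s$ on the compact annulus $\{v:\eta/2\le d(e,v)\le 2\diam(C)\}$ then furnishes a modulus $\phi$ with $\phi(t)\to 0$ as $t\to 0$ such that
\begin{equation*}
\Bigl|\int_{C_{w^k}\setminus C_{w^{k+1}}}K_\Omega(y,z)\,d\mathcal{H}^s z - I\Bigr| \;\le\; \mathcal{H}^s(C)\,\phi(r^{m-k}\diam(C)),
\end{equation*}
while each of the remaining ``near'' terms ($k$ close to $m$) is bounded in absolute value by a uniform constant via the lower bound $d(y,C_{w^k}\setminus C_{w^{k+1}})\gtrsim r^k\eta$ together with the $s$-homogeneous scaling. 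Splitting the sum $\sum_{k=0}^{m-1}$ at $k=m-L_m$ with any $L_m\to\infty$, $L_m=o(m)$ (e.g.\ $L_m=\lfloor\sqrt{m}\rfloor$) gives $T_{\epsilon_m}(1)(y)=mI+o(m)$ uniformly for $y\in C_{w^m}$; hence $T^*_{K_\Omega}(1)\ge m|I|/2$ on $C_{w^m}$ for all large $m$. Since $\mathcal{H}^s(C_{w^m})=r^{ms}\mathcal{H}^s(C)>0$, this yields $\|T^*_{K_\Omega}(1)\|_{L^\infty(\mathcal{H}^s\lfloor C)}=\infty$.

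The $L^2$-unboundedness is then a consequence of this $L^\infty$-blow-up, via the standard principle (applicable thanks to the $s$-AD-regularity of $\mathcal{H}^s\lfloor C$ guaranteed by Schief's theorem, as recalled above) that $L^2(\mathcal{H}^s\lfloor C)$-boundedness of the maximal singular integral $T^*_{K_\Omega}$ would force $T^*_{K_\Omega}(1)\in L^\infty$. The main technical obstacle I anticipate is precisely the uniform extension from the single fixed point $x$ to the whole piece $C_{w^m}$: since only continuity (not H\"older continuity) of $\Omega$ is assumed, the modulus $\phi$ comes with no quantitative rate, and the argument must split at a scale $L_m$ tending to infinity but slowly enough that the ``near'' contribution stays of bounded size $L_m=o(m)$, the ``far'' error $(m-L_m)\phi(r^{L_m}\diam(C))$ then being automatically $o(m)$.
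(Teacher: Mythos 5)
The decisive gap is in your last step. The ``standard principle'' you invoke --- that $L^2(\mathcal{H}^s\lfloor C)$-boundedness of the maximal operator would force $T^*_{K_\Omega}(1)\in L^\infty$ --- is false: for the Hilbert kernel on $[0,1]$ with Lebesgue measure (a $1$-AD-regular setting) one has $T_\epsilon(1)(x)=\log\frac{1-x}{x}$ for $\epsilon<\min(x,1-x)$, so $T^*(1)$ is unbounded near the endpoints although $T^*$ is bounded on $L^2$. What boundedness actually gives is $T^*(1)\in BMO$, and a logarithmic singularity concentrated at one point --- which is exactly what your argument produces, since you get $T^*(1)\gtrsim m$ only on $C_{w^m}$, where $d(y,x)\lesssim r^m$ --- is perfectly compatible with $BMO$. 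Quantitatively, $\mathcal{H}^s(C_{w^m})=r^{ms}\mathcal{H}^s(C)$, so your lower bound only yields $\int (T^*(1))^2\,d\mu\gtrsim\sum_m m^2 r^{ms}<\infty$, i.e.\ nothing about $L^2$. The paper closes exactly this gap with a further iteration that your proof lacks: having found a cylinder $C^2=C_{w^{m'}}$ on which the truncated integral over $C\setminus C_{w^m}$ exceeds $M$, it splits $C\setminus C^2$ into same-generation cylinders, plants a scaled copy of the configuration in each by self-similarity, and repeats until the set where the (cylindrical) maximal function exceeds $M$ has measure $>\tfrac12\mathcal{H}^s(C)$; testing on the function $1$ then gives $L^2$-unboundedness directly. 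This propagation of the largeness away from the fixed point is the essential missing idea; as written you have proved only the $L^\infty$ assertion.

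There is also a flaw, repairable but real, in the $L^\infty$ part. The reduction ``replacing $w$ by $w^L$ one may assume $r\,\mathrm{diam}(C)<\eta/2$'' is vacuous: under $w\mapsto w^L$ the ratio becomes $r^L$ but $\mathrm{dist}(x,C\setminus C_{w^L})=r^{L-1}\eta$, so the condition $r\,\mathrm{diam}(C)<\eta/2$ is invariant under taking powers and cannot be arranged; hence $B(x,\epsilon_m)\cap C=C_{w^m}$ need not hold for any $\epsilon_m$, and the analogous identity for balls centred at general $y\in C_{w^m}$ fails even more readily, since $\mathrm{dist}(y,C\setminus C_{w^m})$ can be as small as $\alpha_C\,\mathrm{diam}(C_{w^m})$ with $\alpha_C<1$. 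The cure is what the paper isolates as Lemma \ref{compop}: the ball-truncated and cylinder-truncated integrals differ by a constant depending only on $C$ and $K_\Omega$, because the symmetric difference meets $C$ only at distances $\gtrsim r^m$ from $y$ and in measure $\lesssim r^{ms}$; an $O(1)$ error is of course harmless against $mI$. Finally, note that in a noncommutative group $d(a\cdot b,b)\neq d(a,e)$ in general, so the perturbation of $x^{-1}\cdot z'$ by the left factor $\delta_{r^{m-k}}((y')^{-1}\cdot x)$ is not literally $O(r^{m-k})$; it does tend to $0$ uniformly for $z'$ in the compact set $C\setminus C_w$, which is all your modulus-of-continuity argument needs, but the claim should be stated that way.
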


\begin{proof} Let $\mu=\mathcal{H}^s \lfloor C$ which as explained earlier satisfies
$$r^s/A_\mu \leq \mu(B(x,r)) \leq A_\mu r^s\ \text{for all}\ x\in C, 0<r<\diam (C).$$
Without loss of generality we can assume that
$$\int _{C \setminus C_w} K_\o(x,y)d \mu y = \eta >0.$$

Notice that the homogeneity of $\o$ implies that for all $v \in \mathcal{I}$,
\begin{equation}
\label{homome}
\o (S_v (x)^{-1} \cdot S_v (y))=\o(\delta_{r_{i_1}\dots r_{i_{|v|}}}(x^{-1}\cdot y))=\o (x^{-1}\cdot y).
\end{equation}

Therefore for all $k \in \N$, after changing variables $y=S_{w^k}(z)$ and recalling that $S_{w^k}(x)=x$,
\begin{equation*}
\begin{split}
\int_{C_{w^k}\stm C_{w^{k+1}}} K_\o(x,y)d \mu y&=\int_{C_{w^k}\stm C_{w^{k+1}}} \frac{\o(x^{-1}\cdot y)}{d(x,y)^s} d\mu y\\
&=\int _{C \stm C_w} \frac{\o(x^{-1}\cdot S_{w^k}(z))}{d(x,S_{w^k}(z))^s} (r_{i_1}\dots r_{i_{|w|}})^{ks} d \mu z\\
&=\int _{C \stm C_w} \frac{\o(S_{w^k} (S_{w^k}^{-1}(x))^{-1}\cdot S_{w^k}(z))}{d(S_{w^k} (S_{w^k}^{-1}(x)),S_{w^k}(z))^s} (r_{i_1}\dots r_{i_{|w|}})^{ks} d \mu z \\
&=\int _{C \stm C_w} \frac{\o(S_{w^k}^{-1}(x)^{-1}\cdot z)}{d(S_{w^k}^{-1}(x),z)^s} d \mu z\\
&=\int _{C \stm C_w}\frac{\o(x^{-1}\cdot z)}{d(x,z)^s} d\mu z \\
&= \eta.
\end{split}
\end{equation*}

Let $M$ be an arbitrary big positive number and choose $m \in \N$ such that $m\eta >M$. Then
$$\int_{C\stm C_{w^{m}}} K_\o(x,y)d \mu y=\sum_{i=0}^{m-1}\int_{C_{w^{i}}\stm C_{w^{i+1}}} K_\o(x,y)d \mu y>M.$$
Therefore by the continuity of $K_\o$ away from the diagonal there exist $m,m' \in \N, m<m',$ such that
\begin{equation}
\label{mest}
\int_{C\stm C_{w^{m}}} K_\o(p,y)d \mu y >M \ \text{for all}\ p \in C_{w^{m'}}.
\end{equation}
To simplify notation let $C^1=C_{w^m}$ and $C^2=C_{w^{m'}}$. Then
$$C\stm C^2 =\bigcup_{i=1}^{j_2} C_{2,i}$$
where the $C_{2,i}$'s are cylinder sets belonging to the same generation with $C^2$, i.e., for all $i=1,\dots, j_2$, $C_{2,i}=C_{v_i}$ for some $v_i \in \mathcal{I}$ with $|v_i|=|w^{m'}|$, see Figure \ref{fig1} for the case of a Cantor set in the plane. Let $S_{2,i},i=1,\dots,j_2,$ be the iterated similarities such that $C_{2,i}=S_{2,i}(C)$ and denote $C^1_{2,i}=S_{2,i}(C^1)$ and $C^2_{2,i}=S_{2,i}(C^2)$. Then exactly as before for $i=1,\dots,j_2,$ and $p \in C^2_{2,i}$,
$$\int_{C_{2,i}\stm C^1_{2,i}}K_\o (p,y)d \mu y= \int_{C\stm C^1}K_\o(S_{2,i}^{-1}(p),y)d \mu y >M$$
by (\ref{mest}) since $S_{2,i}^{-1}(p) \in C^2$.
\begin{figure}
\centering
\includegraphics[scale = 0.4]{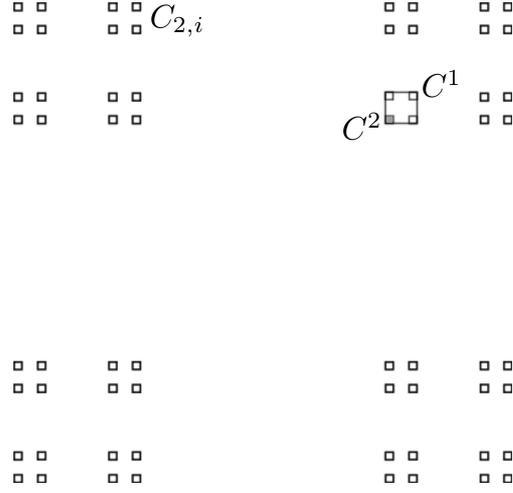}
\caption{The set $C$ is split into cylinders $C_{2,i}$ having the same length as the grey shaded cylinder $C^2$, which is contained in $C^1$.}
\label{fig1}
\end{figure}
Continuing the same splitting process, we can write for $n\geq 3$,
$$C \stm \left(C^2 \cup \bigcup_{i=1}^{j_2}C_{2,i}^2 \cup \dots \cup \bigcup_{i=1}^{j_{n-1}}C_{n-1,i}^2 \right)=\bigcup_{i=1}^{j_{n}}C_{n,i},$$
where for all $3 \leq k\leq n$:
\begin{enumerate}
\item The $C_{k,i}$'s for $i=1,\dots,j_k,$ are cylinder sets in the same generation with any $C_{k-1,i}^2,i=1,\dots,j_{k-1}$.
\item $C_{k,i}^1=S_{k,i} (C^1),i=1,\dots,j_k$ where by $S_{k,i}$ we denote the iterated map such that $S_{k,i}(C)=C_{k,i}$.
\item For all $p \in C_{k,i}^2=S_{k,i}(C^2)$,
\begin{equation}
\label{fmest}
\int_{C_{k,i}\stm C^1_{k,i}}K_\o(p,y)d \mu y>M.
\end{equation}
\end{enumerate}

Next we define the cylindrical maximal operator 
\begin{equation}
\label{cylmax}
T_C^\ast (f)(p)=\sup_{\substack{ v,w \in \mathcal{I} \\ p \in C_v \subset C_w}}\left|\int_{C_w \stm C_v}K_\o(p,y)f(y)d\mu y\right|
\end{equation}
for $p \in C$ and $f \in L^1 (\mu)$.
It follows by (\ref{fmest}) that for every $n \in \N,n \geq 2,$
\begin{equation}
\label{mtcyl}
T_C^\ast (1)(p)>M
\end{equation}
for $p \in C^2 \cup \bigcup_{k=2}^n \bigcup_{i=1}^{j_k} C^2_{k,i}.$

Let $\lambda= \frac{\mu(C^2)}{\mu(C)}<1$. Since $\mu(C \stm C^2)=(1-\lambda)\mu (C)$ it follows easily that for $n \in \N,n \geq 2,$
$$\mu \left(C \stm \left(C^2 \cup \bigcup_{k=2}^n \bigcup_{i=1}^{j_k}C_{k,i}^2\right) \right)=(1-\lambda)^n \mu(C).$$
If $n$ is chosen large enough such that $(1-\lambda)^n< \frac{1}{2}$,
$$\mu(\{p \in C:T_C^\ast (1)(p)>M\})\geq \mu(C^2 \cup \cup_{k=2}^n \cup_{i=1}^{j_k} C^2_{k,i})>\frac{1}{2} \mu (C). $$
This implies that 
$$\|T_C^\ast(1)\|_{L^\infty(\mu)}\geq M\ \text{and} \ \int (T_C^\ast (1))^2 d \mu > \frac{M^2 \mu(C)}{2}.$$
Since $M$ can be selected arbitrarily big, $\|T_C^\ast(1)\|_{L^\infty(\mu)}=\infty$ and the operator $T_C^\ast$ is unbounded in $L^2(\mu)$. 

Notice that there exists a constant $\alpha_C>0$, depending only on the set $C$, such that for every $v \in \mathcal{I}$ 
\begin{equation}
\label{disj}
\dist (C_v,C \stm C_v) \geq \alpha_C  \diam (C_v). 
\end{equation}
To see this first notice that since the sets $C_i= S_i(C),\ i=1,\dots,N,$ are disjoint there exists some $\alpha_C>0$ such that $\dist (C_i,C \stm C_i) \geq \alpha_C  \diam (C_i)$. Hence for all $v=(i_1,\dots,i_{|v|}) \in \mathcal{I}$, 
\begin{equation}
\begin{split}
\label{ssdisj}
\dist (C_v,C_{v|_{|v|-1}} \stm C_v)&=\dist (S_{v|_{|v|-1}}(S_{i_{|v|}}(C)),S_{v|_{|v|-1}}(C\stm S_{i_{|v|}}(C))) \\
&=r_{i_1} \dots r_{i_{|v|-1}} \dist(C \stm S_{i_{|v|}}(C))\\
& \geq \alpha_C r_{i_1} \dots r_{i_{|v|-1}} \diam(S_{i_{|v|}}(C))\\
&= \alpha_C r_{i_1} \dots r_{i_{|v|}} \diam(C)=\diam(C_v).
\end{split}
\end{equation}
Furthermore $C\stm C_v= \cup_{j=1}^{|v|} C_{v|_{j-1}} \stm C_{v|_j}$ and this union is disjoint. Therefore using (\ref{ssdisj})
\begin{equation*}
\begin{split}
\dist(C_v,C \stm C_v)&= \min_{j=1,\dots, |v|} \dist(C_v,C_{v|_{j-1}} \stm C_{v|_j}) \\
&\geq \min_{j=1,\dots, |v|} \dist(C_{v|_j},C_{v|_{j-1}} \stm C_{v|_j})\\
&\geq \alpha_C \min_{j=1,\dots, |v|} \diam(C_{v|_j}) \\
&=\alpha_C \diam (C_v).
\end{split}
\end{equation*}
\begin{lm}
\label{compop}
There is a constant $A_{\text{C}}$ depending only on the Cantor set $C$ and the kernel $K_\o$ such that for all $w,v \in \mathcal{I}$ and $p \in \hn$ for which $C_v \subset C_w$ and $\dist (p,C_v) \leq \frac{\alpha_C}{2}\diam (C_v)$, 
$$\left|\int_{C_w \stm C_v} K_\o(p,y)d \mu y \right|\leq \left| \int_{B(p,2\diam (C_w))\stm B(p,2\diam (C_v)) } K_\o(p,y)d \mu y \right|+ A_{C}.$$
\end{lm}
\begin{proof} 
We can always assume that $\alpha_C \leq 1$ and hence for $p \in \widetilde{C_v}:=\{p:\dist (p,C_v) \leq \frac{\alpha_C}{2}\diam (C_v)\}$
\begin{equation}
\label{longcyldif}
C_w \stm C_v=(C_w \stm B(p,2\diam (C_v))) \cup ((B(p,2\diam (C_v))\stm C_v)\cap C_w)
\end{equation}
and
\begin{equation} 
\begin{split}
\label{longballdif}
B(p,2\diam (C_w))\stm B(p,2\diam (C_v))&=(B(p,2\diam (C_w))\stm (C_w \\
&\quad \quad \cup B(p,2\diam (C_v))\cup (C_w \stm B(p,2\diam (C_v))).
\end{split}
\end{equation}
Using (\ref{longballdif}) we replace the term $C_w \stm B(p,2\diam (C_v))$ in (\ref{longcyldif}) and we estimate
\begin{equation*}
\begin{split}
\left|\int_{C_w \stm C_v} K_\o(p,y)d \mu y \right| &\leq \left| \int_{B(p,2\diam (C_w))\stm B(p,2\diam (C_v)) } K_\o(p,y)d \mu y \right|\\
& +\left| \int_{B(p,2\diam (C_w))\stm (C_w \cup B(p,2\diam (C_v))} K_\o(p,y)d \mu y \right|\\
& +\left| \int_{(B(p,2\diam (C_v))\stm C_v)\cap C_w} K_\o(p,y)d \mu y \right|.
\end{split}
\end{equation*}
Let,
$$I_1=\left| \int_{B(p,2\diam (C_w))\stm (C_w \cup B(p,2\diam (C_v))} K_\o(p,y)d \mu y \right| \ \text{and} \ I_2=\left| \int_{(B(p,2\diam (C_v))\stm C_v)\cap C_w} K_\o(p,y)d \mu y \right|. $$
Notice that $C_v \subset C_w$ implies that $\widetilde{C_v} \subset \widetilde{C_w}$. Hence for $p \in \widetilde{C_v}$ by (\ref{disj})
$$d(p,C\stm C_w)\geq d(C_w, C\stm C_w)-d(p,C_w) \geq \frac{\alpha_C}{2} d(C_w).$$
Therefore we can now estimate
\begin{equation*}
\begin{split}
I_1\leq \int_{B(p,2\diam (C_w))\stm C_w } \frac{{\| \o\|}_{L^\infty (\mu)}}{d(p,y)^s} d \mu y \leq \frac{{\|\o\|}_{L^\infty (\mu)} \mu (B(p,2\diam(C_w))}{2^{-s}{\alpha_C}^s \diam (C_w)^s}\leq \frac{4^{s}A_\mu {\|\o\|}_{L^\infty (\mu)}}{{\alpha_C}^s}.
\end{split}
\end{equation*}
Notice also that if $p \in  \widetilde{C_v}$ and $y \in B(p,2\diam (C_v))\stm C_v \cap \spt \mu$ then $d(p,y) \geq \frac{\alpha_C}{2}\diam(C_v)$. Hence in the same way
\begin{equation*}
\begin{split}
I_2\leq \int_{B(p,2\diam (C_v))\stm C_v}  \frac{{\|\o\|}_{L^\infty (\mu)}}{d(p,y)^s} d \mu y\leq \frac{4^{s}A_\mu {\|\o\|}_{L^\infty (\mu)}}{{\alpha_C}^s}.
\end{split}
\end{equation*}
\end{proof}
Lemma \ref{compop} implies that for all $p \in C$,
$$T_C^\ast (1) (p)\leq 2T_{K_\o}^\ast (1) (p)+A_{C},$$
therefore $\|T_{K_\o}^\ast (1)\|_{L^\infty (\mu)}=\infty$ and $T_{K_\o}^\ast$ is unbounded in $L^2(\mu)$.
\end{proof}

\begin{rem} Even when the ambient space is Euclidean, Theorem \ref{unb} provides new information about the behavior of general homogeneous singular integrals on self-similar sets. For example it follows easily that the operator associated to the kernel $z^3 /|z|^4, z \in \C \stm \{0\},$ is unbounded on many simple 1-dimensional self-similar sets, perhaps the most recognizable among them being the Sierpi\'nski gasket. Furthermore for any kernel $K_\o (x)=\frac{\o(x/|x|)}{|x|^s},x \in \Rn \stm \{0\}, s \in (0,n),$ where $\o$ is continuous one can easily find Sierpi\'nski-type $s$-dimensional self-similar sets $C_s$ for which one can check using Theorem \ref{unb} that the corresponding operator $T_{K_\o}$ is unbounded.
\end{rem}
\section{$\dh$-removability and singular integrals}
For an introduction to Heisenberg groups, see for example \cite{cap} or \cite{blu}. Below we state the basic facts needed in this paper.

The Heisenberg group $\hn$, identified with $\R^{2n+1}$, is a non-abelian group where the group
operation is given by,
$$p\cdot q=\left(p_1+q_1,\cdots,p_{2n}+q_{2n},p_{2n+1}+q_{2n+1}-2\sum_{i=1}^n(p_iq_{i+n}-p_{i+n}q_i)\right).$$
We will also denote points $p \in \hn$ by $p=(p',p_{2n+1}), p'\in\R^{2n}, p_{2n+1}\in\R$.
Recall that for any $q \in \hn$ and $r >0$, the left translations $\tau_q:\hn \ra \hn$ are given by 
$$\tau_q(p)=q\cdot p.$$
Furthermore we define the dilations $\delta_r:\hn \ra \hn$ by
$$\delta_{r}(p)=(rp_1,\dots,rp_{2n},r^2p_{2n+1}).$$
These dilations are group homomorphisms.

A natural metric $d$ on $\hn$ is defined by
$$d(p,q)=\|p^{-1}\cdot q\|$$
where
$$\|p\|=(|(p_1,\dots,p_{2n})|^4_{\R^{2n}}+p^2_{2n+1})^{\frac{1}{4}}.$$
The metric is left invariant, that is $d(q\cdot p_1,q\cdot p_2)=d(p_1,p_2)$, and the dilations satisfy 
$d(\delta_r(p_1),\delta_r(p_2))=rd(p_1,p_2)$. 

The $(2n+1)$-dimensional Lebesgue measure $\mathcal{L}^{2n+1}$ on $\hn$ is left and right invariant and it is a  Haar measure of the Heisenberg group. We stress that although the topological dimension of $\hn$ is $2n+1$ the Hausdorff dimension of $(\hn,d)$ is $Q:=2n+2$, see e.g. \cite{blu}, 13.1.4, which is also called the homogeneous dimension of $\hn$.

The Lie algebra of left invariant vector fields in $\hn$ is generated by
$$X_i:=\partial_i +2x_{i+n}\partial_{2n+1}, \  \ Y_i:=\partial_{i+n}-2x_{i}\partial_{2n+1}, \ \ T:=\partial_{2n+1},$$
for $i=1,\dots,n$. 

The vector fields $X_1,\dots,X_n,Y_1,\dots,Y_n$ define the horizontal subbundle $H\h^n$ of the tangent vector bundle of $\R^{2n+1}$. For every point $p \in \hn$ the horizontal fiber is denoted by $H \hn_p$ and can be endowed with the scalar product $\langle \cdot,\cdot \rangle_p$ and the corresponding norm $|\cdot|_p$ that make the vector fields $X_1,\dots,X_n,Y_1,\dots,Y_n$ orthonormal. Often when dealing with two sections $\varphi$ and $\psi$ whose argument is not stated explicitly we will use the notation $\langle \varphi, \psi \rangle$ instead of $\langle \varphi, \psi \rangle_p$. Therefore for $p,q \in \hn$, $\langle p,q \rangle= \langle p',q' \rangle_{\R^{2n}}$ and $|p|=|p'|_{2n}$. Furthermore for a given $p \in \hn$ we define the projections
$$\pi_p(q)=\sum_{i=1}^n q_i X_i(p)+\sum_{i=1}^n q_{i+n} Y_i(p)\ \text{for} \ q\in \hn.$$

\begin{df}
\label{subunit}
An absolutely continuous curve $\gamma:[0,T]\ra \hn$ will be called sub-unit, with respect to the vector fields $X_1,\dots,X_n,Y_1,\dots,Y_n,$ if there exist real measurable functions $a_1(t),\dots,a_{2n}(t),t \in [0,T]$, such that $\sum_{j=1}^{2n}  a_j(t)^2 \leq 1$ and 
$$\dot \g (t)=\sum_{j=1}^n a_j(t)X_j(\g (t))+\sum_{j=1}^n a_{j+n}(t)Y_j(\g (t)),\ \ \text{for a.e.}\ t\in[0,T].$$
\end{df}

\begin{df}
\label{cc}
For $p,q \in \hn$ their Carnot-Carath\'eodory distance is
\begin{equation*}
\begin{split}
d_c(p,q)=\inf \{T>0:\ &\text{there is a subunit curve} \ \g:[0,T]\ra \hn \\ 
&\text{such that} \ \g(0)=p \ \text{and} \ \g(T)=q\}.
\end{split}
\end{equation*}
\end{df}

\begin{rem} It follows by Chow's theorem that the above set of curves joining $p$ and $q$ is not empty and hence $d_c$ is a metric on $\hn$. Furthermore the infimum in the definition can be replaced by a minimum. See \cite{blu} for more details.
\end{rem}

As well as with $d$ the metric $d_c$ is left invariant 
and homogeneous with respect to dilations,
see, for example, Propositions 5.2.4 and 5.2.6 of \cite{blu}. The closed and open balls with respect to $d_c$ will be denoted by $B_c(p,r)$ and $U_c(p,r)$. 

The following result is well known and can be found for example in \cite{blu} and \cite{cap}. 

\begin{pr}
\label{equiv}
The Carnot-Carath\'eodory distance $d_c$ is globally equivalent to the metric $d$.
\end{pr}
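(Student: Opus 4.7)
The plan is to exploit three structural features shared by $d$ and $d_c$: left-invariance, $1$-homogeneity under the dilations $\delta_r$, and continuity with respect to the underlying Euclidean topology on $\R^{2n+1}$. Once these are established the equivalence reduces to a compactness argument on a dilation cross-section.

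First I would use left-invariance to pass from $d(p,q)$ and $d_c(p,q)$ to the gauges $N_1(p):=d(0,p)=\|p\|$ and $N_2(p):=d_c(0,p)$, reducing the problem to the existence of constants $0<c_1\le c_2<\infty$ with $c_1 N_1\le N_2\le c_2 N_1$ on $\hn$. Both gauges are $1$-homogeneous under $\delta_r$: for $N_1$ this is immediate from the definition of $\|\cdot\|$, and for $N_2$ it follows because $\g\mapsto \delta_r\circ\g$, with time reparametrized by the factor $r$, bijects subunit curves joining $0$ to $p$ onto subunit curves joining $0$ to $\delta_r(p)$. Granting that $N_2$ is continuous and strictly positive away from $0$ in the Euclidean topology, I restrict to the compact Kor\'anyi sphere $S=\{p:\|p\|=1\}$, where the continuous positive function $N_2$ attains a positive minimum $c_1$ and a finite maximum $c_2$. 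For arbitrary $p\ne 0$, writing $p=\delta_{\|p\|}(q)$ with $q\in S$ and invoking $1$-homogeneity of both gauges yields $c_1\|p\|\le N_2(p)\le c_2\|p\|$, which trivially extends to $p=0$.

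The main obstacle is precisely the continuity of $N_2$ at the origin. The lower bound $N_2(p)\gtrsim\|p\|$ near $0$ is the easier half: projecting any competing subunit curve $\g=(\g',\g_{2n+1})$ to its first $2n$ components gives a Euclidean $1$-Lipschitz path, so $|p'|\le N_2(p)$, while the structural identity $\dot\g_{2n+1}=2\sum_i(\g_{i+n}\dot\g_i-\g_i\dot\g_{i+n})$ combined with $|\g'(t)|\le t$ produces the isoperimetric-type bound $|p_{2n+1}|\lesssim N_2(p)^2$; together these give $N_2(p)\gtrsim(|p'|^4+p_{2n+1}^2)^{1/4}=\|p\|$. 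The upper bound $N_2(p)\lesssim\|p\|$ is the substantive point and requires an explicit construction: first I would join $0$ to $(p',0)$ by the horizontal straight segment $t\mapsto (tp'/|p'|,0)$, which one checks directly is subunit and has length $|p'|$, and then close a planar horizontal loop in one of the $(x_i,x_{i+n})$-planes whose signed enclosed area produces the missing vertical coordinate $p_{2n+1}$, using the bracket relation $[X_i,Y_i]=-4T$. A circular loop of enclosed area of order $|p_{2n+1}|$ can be taken with perimeter of order $\sqrt{|p_{2n+1}|}$, so altogether $N_2(p)\lesssim|p'|+\sqrt{|p_{2n+1}|}\lesssim\|p\|$. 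The detailed verification of this bracket-generating construction is the technical heart of the argument, and for the precise computations I would refer to Chapter 5 of \cite{blu}.
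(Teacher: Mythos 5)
Your argument is correct, but note that the paper does not prove this proposition at all: it is stated as well known, with the reader referred to \cite{blu} and \cite{cap}. What you have written is essentially the standard proof that appears in those references (the ``ball--box''/equivalence-of-homogeneous-gauges argument), and all the key steps check out: the reduction via left-invariance to the gauges $N_1(p)=\|p\|$ and $N_2(p)=d_c(0,p)$, the $1$-homogeneity of $N_2$ under $\delta_r$ via reparametrization of subunit curves, the lower bound $N_2(p)\gtrsim\|p\|$ from $|\dot\g'|\le 1$ together with $|\dot\g_{2n+1}|\le 2|\g'(t)|\le 2t$, and the upper bound by concatenating a horizontal segment to $(p',0)$ (which is indeed subunit with vanishing vertical component, since $\sum_j(v_j\,t v_{j+n}-v_{j+n}\,t v_j)=0$) with a left-translated planar loop of area $\sim|p_{2n+1}|$ and perimeter $\sim\sqrt{|p_{2n+1}|}$, consistent with $[X_i,Y_i]=-4T$. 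One expository remark: the compactness argument on the Kor\'anyi sphere is redundant in your write-up, because the two quantitative bounds $N_2(p)\gtrsim\|p\|$ and $N_2(p)\lesssim\|p\|$ are established for \emph{all} $p$ and already constitute the global equivalence; the sphere argument would only be needed if one had mere qualitative continuity and positivity of $N_2$ away from the origin. Either streamline by dropping the compactness step, or keep it and note that continuity of $N_2$ everywhere follows from continuity at $0$ via $|N_2(p)-N_2(q)|\le N_2(p^{-1}\cdot q)$ and continuity of the group operation.
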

If $f$ is a real function defined on an open set of $\hn$ its $\h$-gradient is given by
$$\nabla_\h f=(X_1f,\dots,X_n f,Y_1 f,\dots,Y_n f).$$
The $\h$-divergence of a function $\phi=(\phi_1,\dots,\phi_{2n}):\hn \ra \R^{2n}$ is defined as
$$\di_\h \phi=\sum_{i=1}^n(X_i\phi_i+Y_i \phi_{i+n}).$$
The sub-Laplacian in $\hn$ is given by
$$\Delta_\h= \sum_{i=1}^n (X_i^2+Y_i^2)$$
or equivalently 
$$\D_\h=\divh \nabla_\h .$$ 

\begin{df}
\label{harmdef}Let $D\subset \hn$ be an open set. A real valued function $f \in C^2(D)$ is called $\dh$-harmonic, or simply harmonic, on $D$ if $\dh f= 0$ on $D$.
\end{df}

Actually, the assumption $f \in C^2(D)$ is superfluous, since even the distributional solutions of $\dh f= 0$ are $C^{\infty}$, see \cite{blu}. 

\begin{df}
\label{pansud}
Let $D$ be an open subset of $\hn$. We say that $f:D \ra \R$ is Pansu differentiable at $p \in D$ if there exists a homomorphism $L:\hn \ra \R$ such that
$$\lim_{r \ra 0^+}\frac{f(\t_p (\delta_r \nu))-f(p)}{r}=L(\nu)$$ 
uniformly with respect to $\nu$ belonging to some compact subset of $\hn$. Furthermore, $L$ is unique and we write $L:=d_\h f(p)$.
\end{df}
The proof of the following proposition, as well as a comprehensive discussion about calculus in $\hn$, can be found in \cite{fss}.
\begin{pr}
\label{pansupro}
If $f$ is Pansu differentiable at $p$, then 
$$d_\h f(p)(\nu)=\langle \anah f (p), \pi_p(\nu)\rangle_p.$$
\end{pr}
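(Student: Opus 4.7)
The plan is to use the fact that $L:\hn\to\R$ is a continuous group homomorphism into an abelian group, together with the dilation rescaling built into the Pansu definition, to force $L$ to factor through the abelianization of $\hn$; a coefficient comparison against the horizontal derivatives of $f$ then recovers the formula.

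First I would extract homogeneity under dilations: $L(\delta_r\nu)=rL(\nu)$ for every $r>0$ and every $\nu\in\hn$. Writing $\delta_s\delta_r=\delta_{sr}$ inside the limit that defines $L(\delta_r\nu)$ and substituting $u=sr$ turns it into $r$ times the limit that defines $L(\nu)$, and the uniformity on compact sets of $\nu$ makes the reparametrization legitimate.

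Next I would show $L$ vanishes on the vertical axis. Applied to $e_{2n+1}=(0,\dots,0,1)$, the homogeneity above gives $L(te_{2n+1})=\sqrt{t}\,L(e_{2n+1})$ for $t>0$, since $\delta_re_{2n+1}=r^2e_{2n+1}$. But $e_{2n+1}$ is central in $\hn$, so $(te_{2n+1})\cdot(se_{2n+1})=(t+s)e_{2n+1}$ and the homomorphism property yields $\sqrt{t+s}\,L(e_{2n+1})=(\sqrt{t}+\sqrt{s})L(e_{2n+1})$, which forces $L(e_{2n+1})=0$. Any $\nu=(\nu',\nu_{2n+1})$ factors as $(\nu',0)\cdot(0,\nu_{2n+1})$ under the Heisenberg product, so $L(\nu)=L((\nu',0))$; moreover $(\nu',0)\cdot(\mu',0)$ differs from $(\nu'+\mu',0)$ only by a central element, so $L$ restricted to horizontal elements is additive in $\nu'$ and hence $\R$-linear: $L((\nu',0))=\sum_{j=1}^{2n}a_j\nu_j'$ for some $a\in\R^{2n}$.

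Finally, to identify $a$ I would test $L$ on $\nu=e_i$, $i=1,\dots,2n$: a direct computation with the group law shows that $r\mapsto\tau_p(re_i)$ is the integral curve through $p$ of $X_i$ when $i\leq n$ and of $Y_{i-n}$ when $i>n$, so the Pansu limit evaluates to $L(e_i)=X_if(p)$ and $L(e_{i+n})=Y_if(p)$. Thus $a$ is the coefficient vector of $\anah f(p)$ in the orthonormal frame $\{X_i(p),Y_i(p)\}$, and since $\pi_p(\nu)$ has coordinates $\nu'$ in the same frame one obtains $L(\nu)=\langle\anah f(p),\pi_p(\nu)\rangle_p$, as claimed. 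The main obstacle, to my mind, is the vanishing on the vertical axis: it genuinely requires combining both the Pansu-dilation rescaling and the centrality of $e_{2n+1}$, which together encode the fact that continuous homomorphisms $\hn\to\R$ factor through the abelianization $\R^{2n}$.
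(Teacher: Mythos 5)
Your argument is correct. There is nothing in the paper to compare it against: the authors do not prove Proposition \ref{pansupro} but simply refer to \cite{fss}, so your derivation --- dilation homogeneity $L(\delta_r\nu)=rL(\nu)$, vanishing of $L$ on the centre via $\delta_r e_{2n+1}=r^2e_{2n+1}$ combined with additivity, reduction to a linear functional of $\nu'$, and identification of the coefficients by computing that $r\mapsto p\cdot(re_i)$ is the integral curve of $X_i$ (resp.\ $Y_{i-n}$) through $p$ --- is a complete, self-contained proof of the standard kind. The only point worth flagging is that the last step, $L(e_i)=X_if(p)$, tacitly assumes $f$ is differentiable along those integral curves so that $\anah f(p)$ exists in the classical sense; Pansu differentiability alone only guarantees the existence of the one-sided limits defining $L(e_i)$. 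This is harmless in context, since the proposition is only applied to $f\in C^1$ (as in Proposition \ref{panulip}), but it deserves a sentence.
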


We shall consider removable sets for Lipschitz solutions of the sub-Laplacian:

\begin{df}
\label{rem} A compact set $C \subset \hn$ will be called removable, or $\dh$-removable for Lipschitz $\dh$-harmonic functions, if for every domain $D$ with $C \subset D$ and every Lipschitz function $f:D \ra \R$,
$$\dh f=0 \ \text{in} \  D\stm C \ \text{implies} \ \dh f = 0 \ \text{in} \ D.$$
\end{df}
As usual we denote for any $D \subset \hn$ and any function $f:D \ra \R$,
$$\text{Lip} (f):= \sup_{x,y \in D} \frac{|f(x)-f(y)|}{d(x,y)},$$
and we will also use the following notation for the upper bound for the Lipschitz constants in  Carnot-Carath\'eodory balls:
$$\text{Lip}_{\text{B}} (f):= \sup \{\text{Lip} (f|_{U_c(p,r)}):p \in D, r>0, U_c(p,r) \subset D\}.$$

The following proposition is known. It follows, for example, from the Poincar\'e inequality, see Theorem 5.16 in \cite{cap} and the arguments for its proof on pages 106-107. It is also essentially contained in a more general setting in Theorems 1.3 and 1.4 of \cite{gn}. However, we prefer to give a simple direct proof.

\begin{pr}
\label{panulip}
Let $D \subset \hn$ be a domain and let $f \in C^1(D)$. Then $\text{Lip}_{\text{B}} (f) < \infty$ if and only if $\|\anah f\|_\infty < \infty$. More precisely, there is a constant $c(n)$ depending only on $n$ such that
\begin{equation}\label{panulip1}
\|\anah f\|_\infty\leq \text{Lip}_{\text{B}} (f)\leq c(n)\|\anah f\|_\infty.
\end{equation}
\end{pr}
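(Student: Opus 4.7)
The plan is to establish the two inequalities in \eqref{panulip1} separately, each time reducing to directional derivatives along horizontal curves and then invoking the global equivalence of $d$ and $d_c$ from Proposition~\ref{equiv}.

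For the left inequality $\|\anah f\|_\infty \leq \text{Lip}_{\text{B}}(f)$, I would fix $p\in D$ and a unit vector $\nu \in \R^{2n}$, and consider the horizontal direction $\tilde\nu = (\nu,0) \in \hn$. Because $\|\delta_s \tilde\nu\| = s$, one has $d(p, \tau_p(\delta_s \tilde\nu)) = s$, so for $s$ sufficiently small the perturbed point $\tau_p(\delta_s\tilde\nu)$ lies in some $U_c(p,r) \subset D$. The Lipschitz hypothesis then yields
$$\frac{|f(\tau_p(\delta_s\tilde\nu)) - f(p)|}{s} \le \text{Lip}_{\text{B}}(f).$$
Letting $s\to 0^+$ and using Definition~\ref{pansud} together with Proposition~\ref{pansupro}, one obtains $|\langle \anah f(p), \pi_p(\tilde\nu)\rangle_p| \le \text{Lip}_{\text{B}}(f)$. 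Since $\pi_p(\tilde\nu)$ has $|\cdot|_p$-norm equal to $|\nu|_{\R^{2n}}=1$ in the orthonormal frame $\{X_i,Y_i\}$, taking the sup over unit $\nu$ and then over $p\in D$ gives the first inequality.

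For the reverse inequality $\text{Lip}_{\text{B}}(f) \leq c(n)\|\anah f\|_\infty$, I would fix a ball $U_c(p,r)\subset D$ and $p_1, p_2 \in U_c(p,r)$, and use Definition~\ref{cc} to pick a sub-unit curve $\gamma:[0,T]\to\hn$ from $p_1$ to $p_2$ with $T$ as close as desired to $d_c(p_1, p_2)$. Writing $\dot\gamma(t) = \sum_{j=1}^n(a_j(t) X_j(\gamma(t)) + a_{j+n}(t) Y_j(\gamma(t)))$ with $\sum_j a_j(t)^2 \le 1$, the chain rule gives
$$\frac{d}{dt} f(\gamma(t)) = \sum_{j=1}^n \bigl(a_j(t)\, X_j f(\gamma(t)) + a_{j+n}(t)\, Y_j f(\gamma(t))\bigr),$$
and Cauchy--Schwarz bounds this by $|\anah f(\gamma(t))| \le \|\anah f\|_\infty$. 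Integrating and letting $T\to d_c(p_1,p_2)$ produces $|f(p_2) - f(p_1)| \le d_c(p_1,p_2)\|\anah f\|_\infty$, and Proposition~\ref{equiv} converts $d_c$ to $d$ at the cost of the dimensional constant $c(n)$.

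The main obstacle I foresee is that the sub-unit geodesic joining $p_1,p_2\in U_c(p,r)$ is only confined to $B_c(p_1, d_c(p_1,p_2))\subset B_c(p, 3r)$ and so a priori need not lie in $D$, meaning the chain-rule computation is not directly justified. I would handle this by chaining: for $p_1, p_2$ close enough relative to $d_c(\cdot,\partial D)$ the minimising curve automatically stays inside $D$, and for general pairs in $U_c(p,r)$ I would split an auxiliary path from $p_1$ to $p_2$ in $D$ into finitely many sufficiently short sub-arcs, apply the local estimate to each, and sum. Since the total sub-unit length remains comparable to $d_c(p_1,p_2)$, only the dimensional constant from Proposition~\ref{equiv} propagates into the final bound.
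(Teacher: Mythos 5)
Your proposal is correct and follows essentially the same route as the paper: the lower bound comes from horizontal difference quotients combined with Proposition \ref{pansupro} (the paper invokes Pansu's Rademacher theorem at a.e.\ point, which for $f\in C^1$ is not even needed), and the upper bound from integrating $\anah f$ along a sub-unit curve, applying Cauchy--Schwarz, and converting $d_c$ to $d$ via Proposition \ref{equiv}. The one difference concerns your final worry about the curve leaving $D$: the paper sidesteps any chaining by estimating $|f(q)-f(p)|$ only for $q$ the \emph{center} of a ball $U_c(q,r_q)\subset D$, since the minimizing sub-unit curve from $q$ to $p\in U_c(q,r_q)$ has length $d_c(q,p)<r_q$ and therefore stays inside $U_c(q,r_q)$ automatically --- a cleaner fix than your chaining argument, whose auxiliary path in $D$ need not have length comparable to $d_c(p_1,p_2)$.
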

\begin{proof}
By Pansu's Rademacher type theorem, see \cite{pan}, $f$ is a.e. Pansu differentiable in $D$. Let $q$ be a point where $f$ is Pansu differentiable, then for all $\nu \in \hn$,
$$\left|d_\h f(q)(\nu)\right|=\lim_{r \ra 0^+}\left|\frac{f(q\cdot \delta_r(\nu))-f(q)}{r}\right| \leq \text{Lip}_{\text{B}} (f) \|\nu\|.$$ 
By Proposition \ref{pansupro},
$$d_\h f(q)(\nu)= \langle \anah f(q), \pi_q(\nu)\rangle_q= \langle \anah f(q),\nu'\rangle_{\R^{2n}},$$
and choosing $\nu=(\anah f(q),0)$ we get $|\anah f (q)|\leq \text{Lip}_{\text{B}} (f)$, and so $\|\anah f\|_\infty\leq \text{Lip}_{\text{B}} (f)$. 

On the other hand we check that if $\|\anah f\|_\infty < \infty$, then $\text{Lip}_{\text{B}} (f) < \infty$. For any $q \in D$ there exists a radius $r_q$ such that $U_c(q,r_q) \subset D$. Then by the definition of the Carnot-Carath\'eodory metric for any $p \in U_c(q,r_q)$ there exists a subunit curve  $\gamma:[0,T]\ra U_c(q,r_q)$, as in Definition \ref{subunit}, such that $\gamma(0)=q,\gamma(T)=p$ and $T=d_c(q,p)$. Then,
\begin{equation*}
\begin{split}
|f(q)-f(p)|&=\left|\int_0^T\frac{d}{dt}(f(\gamma(t)))dt\right| =\left|\int_0^T\langle\nabla f (\gamma(t)),\dot{\gamma}(t)\rangle dt \right| \\
&\leq \int_0^T\left| \sum_{j=1}^n a_j(t) \langle \nabla f(\gamma(t)),X_j (\gamma(t))\rangle+a_{j+n}(t) \langle\nabla f(\gamma(t)),Y_j (\gamma(t))\rangle \right|dt \\
&\leq \int_0^T \left( \sum_{j=1}^{2n}a_j(t)^2\right)^{\frac{1}{2}} \left(\sum_{j=1}^n \langle\nabla f (\g(t)),X_j (\gamma (t)) \rangle ^2+ \langle\nabla f (\g(t)),Y_j (\gamma (t)) \rangle^2 \right)^{\frac{1}{2}}dt\\
&\leq \int_0^T ( \sum_{j=1}^n (X_j f(\gamma(t)))^2 + (Y_j f(\gamma(t)))^2)^{\frac{1}{2}}dt \\
&= \int_0^T |\anah f (\gamma (t))|dt \\
&\leq \|\anah f\|_\infty d_c(q,p),
\end{split}
\end{equation*}
where in the fourth line we used that 
$$\langle\nabla f (\g(t)),X_j (\gamma (t)) \rangle=X_j (f)(\g(t)) \ \text{and} \ \langle\nabla f (\g(t)),Y_j (\gamma (t)) \rangle=Y_j(f )(\g(t)).$$
The inequality  $\text{Lip}_{\text{B}} (f)\leq c(n)\|\anah f\|_\infty$ follows from this and Proposition \ref{equiv}. 
\end{proof}

Fundamental solutions for sub-Laplacians in homogeneous Carnot groups are defined in accordance with the classical Euclidean setting. In particular in the case of the sub-Laplacian in $\hn$:
\begin{df}[Fundamental solutions] A function $\Gamma : \R^{2n+1} \stm \{0\} \ra \R$ is a fundametal solution for $\dh$ if:
\begin{enumerate}
\item $\G \in C^{\infty}(\R^{2n+1} \stm \{0\})$,
\item $\G \in L_{\text{loc}}^1(\R^{2n+1})$ and $\lim_{\|p\| \ra \infty}\G(p)\ra 0$,
\item for all $\varphi \in C_0^{\infty} (\R^{2n+1})$,
$$\int_{\R^{2n+1}}\G(p) \dh \varphi (p) dp=-\varphi (0).$$
\end{enumerate}
\end{df}

It also follows easily, see Theorem 5.3.3 and Proposition 5.3.11 of \cite{blu}, that for every $p \in \hn$, 
\begin{equation}
\label{fundconv}
\G \ast \dh \varphi (p)=-\varphi(p) \ \text{for all} \ \varphi \in C_0^{\infty} (\R^{2n+1}).
\end{equation}
Convolutions are defined as usual by $$f\ast g (p)=\int f(q^{-1}\cdot p)g(q)dq$$
for $f,g \in L^1$ and $p \in \hn$.

One very general result due to Folland, see \cite{fol}, guarantees that there exists a fundamental solution for all sub-Laplacians in homogeneous Carnot groups with homogeneous dimension $Q>2$. In particular the fundamental solution $\G$ of $\dh$ is given by
$$\G(p)=C_Q \|p\|^{2-Q}\ \text{for} \ p \in \hn \stm \{0\}$$
where $Q=2n+2$ is the homogeneous dimension of $\hn$. The exact value of $C_Q$ can be found in  \cite{blu}.

Let $K=\anah \Gamma$, then $K=(K_1,\dots,K_{2n}):\hn \ra \R^{2n}$ where 
\begin{equation}
\label{lapkernels}
\begin{split}
K_i(p)=c_Q \frac{p_i|p'|^2+p_{i+n}p_{2n+1}}{\|p\|^{Q+2}} \ \text{and} \
K_{i+n}(p)=c_Q \frac{p_{i+n}|p'|^2-p_{i}p_{2n+1}}{\|p\|^{Q+2}},
\end{split}
\end{equation}
for  $i=1,\dots,n$, $p \in \hn \stm \{0\}$ and $c_Q=(2-Q)C_Q$.  We will also use the following notation,
\begin{equation}
\label{omega}
\begin{split}
\o_i(p)=c_Q \frac{p_i|p'|^2+p_{i+n}p_{2n+1}}{\|p\|^3} \ \text{and} \
\o_{i+n}(p)=c_Q \frac{p_{i+n}|p'|^2-p_{i}p_{2n+1}}{\|p\|^3},
\end{split}
\end{equation}
for  $i=1,\dots,n$ and $p \in \hn \stm \{0\}$.
Hence,
\begin{equation}
\label{vecker}
\begin{split}
K_i(p)=\frac{\o_i(p)}{\|p\|^{Q-1}} \ \text{and} \ K(p)=\frac{\o(p)}{\|p\|^{Q-1}},
\end{split}
\end{equation}
for  $i=1,\dots,2n,\o=(\o_1,\dots,\o_{2n})$ and $p \in \hn \stm \{0\}$. It follows that the functions $\o_i$ are homogeneous and hence, recalling Definition \ref{hom}, the kernels $K_i$ are $(Q-1)$-homogeneous.

The following proposition asserts that $K$ is a standard kernel.
\begin{pr}
\label{stanker}
For all $i=1,\dots,2n$, 
\begin{enumerate}
\item $|K_i (p)| \lesssim \|p\|^{1-Q}$ for $p \in \hn \stm \{0\}$,
\item $|\anah K_i (p)| \lesssim  \|p\|^{-Q}$ for $p \in \hn \stm \{0\}$,
\item $|K_i (p^{-1} \cdot q_1)-K_i (p^{-1} \cdot q_2)| \lesssim  \max \left\{\dfrac{d(q_1,q_2)}{d(p,q_1)^Q},\dfrac{d(q_1,q_2)}{d(p,q_2)^Q} \right\}$ for $q_1,q_2 \neq p \in \hn$.
\end{enumerate}
\end{pr}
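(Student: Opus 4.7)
The proof will treat the three estimates in the order stated, the first two being direct consequences of homogeneity and the third a mean-value-type deduction from the second.

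For (i), the plan is to exploit the representation $K_i(p)=\omega_i(p)/\|p\|^{Q-1}$ in \eqref{vecker}, where $\omega_i$ is continuous on $\hn\setminus\{0\}$ and homogeneous of degree zero with respect to $\{\delta_r\}$, hence bounded on the unit sphere $\{\|p\|=1\}$. Thus $|\omega_i(p)|=|\omega_i(\delta_{1/\|p\|}p)|\leq \|\omega_i\|_{L^\infty(\{\|p\|=1\})}$, and (i) follows. (Alternatively one estimates the explicit numerators $p_i|p'|^2+p_{i+n}p_{2n+1}$ in \eqref{lapkernels} by $\|p\|^3$ using $|p_j|\leq \|p\|$ for $j\leq 2n$ and $|p_{2n+1}|\leq \|p\|^2$.)

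For (ii), I would observe that $K=\anah\Gamma$ with $\Gamma(p)=C_Q\|p\|^{2-Q}$ smooth on $\hn\setminus\{0\}$, so each component of $\anah K_i$ is a second-order horizontal derivative $X_j K_i$ or $Y_j K_i$ of $\Gamma$. Since the left-invariant vector fields $X_j,Y_j$ are $\delta_r$-homogeneous of degree $-1$ and $\Gamma$ is $\delta_r$-homogeneous of degree $2-Q$, any such second-order derivative is $\delta_r$-homogeneous of degree $-Q$. Being continuous on the unit sphere $\{\|p\|=1\}$, it is bounded there, and scaling via $\delta_{1/\|p\|}$ yields $|\anah K_i(p)|\lesssim\|p\|^{-Q}$.

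For (iii), by left-invariance of both $d$ and $K_i(p^{-1}\cdot q)$ set $x=p^{-1}\cdot q_1$, $y=p^{-1}\cdot q_2$; then $d(x,y)=d(q_1,q_2)$, $\|x\|=d(p,q_1)$, $\|y\|=d(p,q_2)$, and the target becomes
$$|K_i(x)-K_i(y)|\lesssim\max\bigl\{d(x,y)/\|x\|^Q,\;d(x,y)/\|y\|^Q\bigr\}.$$
Assume without loss of generality $\|x\|\leq\|y\|$. If $d(x,y)\geq\|x\|/(4C_0)$, where $C_0$ is the constant in Proposition \ref{equiv}, then (i) gives $|K_i(x)-K_i(y)|\leq |K_i(x)|+|K_i(y)|\lesssim\|x\|^{1-Q}\lesssim d(x,y)/\|x\|^Q$, done. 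Otherwise $d(x,y)<\|x\|/(4C_0)$: using Proposition \ref{equiv} pick a subunit curve $\gamma:[0,T]\to\hn$ with $\gamma(0)=x$, $\gamma(T)=y$, $T=d_c(x,y)\lesssim d(x,y)$, as in Definition \ref{subunit}. For every $t\in[0,T]$,
$$\|\gamma(t)\|\geq \|x\|-d(x,\gamma(t))\geq \|x\|-C_0 d_c(x,\gamma(t))\geq \|x\|-C_0 T\geq \|x\|/2,$$
so by (ii), $|\anah K_i(\gamma(t))|\lesssim \|x\|^{-Q}$ throughout. The chain rule computation from the proof of Proposition \ref{panulip} then gives
$$|K_i(x)-K_i(y)|\leq\int_0^T|\anah K_i(\gamma(t))|\,dt\lesssim T\cdot\|x\|^{-Q}\lesssim d(x,y)/\|x\|^Q,$$
which completes (iii).

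The only delicate point is (iii): one must track that the subunit curve stays away from the origin so that the pointwise bound (ii) can be integrated, and this requires invoking the global equivalence of $d_c$ and $d$ from Proposition \ref{equiv}. The two regimes (separated vs.\ close pair) are handled by (i) and (ii) respectively, which is a standard pattern.
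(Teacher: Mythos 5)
Your proposal is correct and follows essentially the same route as the paper: part (i) from the form of the kernel, part (ii) from bounding the second-order horizontal derivatives of $\Gamma$ (the paper computes the Euclidean partials explicitly, you invoke $\delta_r$-homogeneity of degree $-Q$ plus compactness of the unit sphere --- an equivalent justification), and part (iii) by the same two-case split with integration of (ii) along a subunit curve, exactly as in the paper's second case. The only blemish is a constant slip in (iii): with the threshold $d(x,y)<\|x\|/(4C_0)$ your chain gives $C_0T<C_0^2\|x\|/(4C_0)=C_0\|x\|/4$, which is not $\leq\|x\|/2$ unless $C_0\leq 2$; taking the threshold $\|x\|/(4C_0^2)$ (or splitting on $d_c$ as the paper does) repairs this trivially.
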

\begin{proof}
The size estimate (i) follows immediately by the definition of the kernel $K$. 
It also follows easily that for $p \in \hn \stm \{0\}$,
$$|\partial_j K_i (p)| \lesssim \frac{1}{\|p\|^{Q}} \ \text{for} \ j,i=1,\dots,2n ,$$
and
$$|\partial_{2n+1} K_i (p)| \lesssim \frac{1}{\|p\|^{Q+1}} \ \text{for} \ i=1,\dots,2n.$$
Hence 
$$|X_i K_j (p)| \lesssim \frac{1}{\|p\|^{Q}} \ \text{and} \ |Y_i K_j (p)|\lesssim \frac{1}{\|p\|^{Q}}\ \text{for} \ i=1,\dots,n,\ j=1,\dots,2n,$$
and (ii) follows.

For the proof of (iii) let $q_1,q_2 \neq p \in \hn$. Without loss of generality assume that $d_c(q_1,p)\leq d_c(q_2,p)$. We are going to consider two cases.

\textit{First Case:} $d_c(q_1,q_2)\geq \frac{1}{2} d_c (q_1,p)$ \\
Since $d_c$ is globally equivalent to $d$ we can use (i) to obtain
\begin{equation*}
\begin{split}
|K_i (p^{-1} \cdot q_1)-K_i (p^{-1} \cdot q_2)| & \lesssim \frac{1}{d_c(q_1,p)^{Q-1}}+\frac{1}{d_c(q_2,p)^{Q-1}}\\
&\leq \frac{ 2}{d_c(q_1,p)^{Q-1}} \\
&\leq \frac{4 d_c(q_1,q_2)}{d_c(q_1,p)^{Q}}\\
&\lesssim \max \left\{\dfrac{d(q_1,q_2)}{d(p,q_1)^Q},\dfrac{d(q_1,q_2)}{d(p,q_2)^Q} \right\}.
\end{split}
\end{equation*}

\textit{Second Case}: $d_c(q_1,q_2)< \frac{1}{2} d_c (q_1,p)$ \\
By the definition of the Carnot-Carath\'eodory metric there is a sub-unit curve $\g:[0,d_c(q_1,q_2)]\ra \hn$ such that $\g (0)=p^{-1}\cdot q_1$ and $\g (d_c(q_1,q_2))=p^{-1} \cdot q_2$. Furthermore,
\begin{equation}
\label{refe}
\g([0,d_c(q_1,q_2)]) \subset B_c(p^{-1}\cdot q_1, d_c (q_1,q_2)).
\end{equation}
Hence for every $t \in [0,d_c(q_1,q_2)]$, since by (\ref{refe}) $d_c(\g(t),p^{-1}\cdot q_1) \leq d_c(q_1,q_2)$,
\begin{equation}
\label{gammacomp}
\begin{split}
\|\g(t)\|\gtrsim d_c(0,\g(t)) &\geq d_c (0, p^{-1}\cdot q_1)-d_c(\g(t),p^{-1}\cdot q_1) \\
&\geq d_c(p,q_1)-d_c(q_1,q_2)\\
&\geq \frac{1}{2} d_c(q_1,p).
\end{split}
\end{equation}
Therefore if $T=d_c(q_1,q_2)$  we can estimate as in Proposition \ref{panulip} for $i=1,\dots,2n$:
\begin{equation*}
\begin{split}
|K_i (p^{-1} \cdot q_1)-K_i (p^{-1} \cdot q_2)|&=|K_i (\g (0))-K_i (\g (T))| \\
&=\left| \int_0^T  \frac{d}{dt} (K_i (\g(t)) dt \right| \\
&\leq \int_0^T\left(\sum_{j=1}^n (X_j (K_i)(\g(t)))^2+(Y_j(K_i )(\g(t)))^2 \right)^\frac{1}{2} dt\\
&=\int_0^T|\anah K_i (\g(t))|dt\\
&\lesssim \int_0^T \frac{dt}{\|\g(t)\|^Q} \\
&\lesssim \frac{d_c(q_1,q_2)}{d_c(q_1,p)^Q}\\
&\lesssim \max \left\{\dfrac{d(q_1,q_2)}{d(p,q_1)^Q},\dfrac{d(q_1,q_2)}{d(p,q_2)^Q} \right\}.
\end{split}
\end{equation*}
where we used (ii) and (\ref{gammacomp}) respectively.
\end{proof}
In the following we prove a representation theorem for Lipschitz harmonic functions of $\hn$ outside a compact set of finite $\ha^{Q-1}$ measure.

\begin{thm}
\label{main}
Let $C$ be a compact subset of $\hn$ with  $\mathcal{H} ^{Q-1} (C) < \infty$ and let $D\supset C$ be a domain in $\hn$. Suppose $f:D \ra \R$ is a Lipschitz function such that  $\dh f = 0$ in $D \stm C$.
Then there exist a bounded domain $G, C \subset G \subset D$, a Borel function $h:C\ra \R$ and a $\dh$-harmonic function $H:G \ra \R$ such that 
$$f(p)=\int_C \G(q^{-1}\cdot p)h(q) d \ha^{Q-1}q+H(p) \ \text{for} \ p \in G \stm C$$
and $\|h\|_{L^\infty(\ha^{Q-1} \lfloor C)} +\|\anah H \|_\infty  \lesssim 1$.
\end{thm}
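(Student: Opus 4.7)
The plan is to identify $\dh f$, viewed as a distribution on $D$, with a finite signed Borel measure carried by $C$, write that measure as $-h\,\ha^{Q-1}\lfloor C$ with $\|h\|_\infty\lesssim \text{Lip}(f)$, and then take $H$ to be $f$ minus the $\G$-potential of $h\,\ha^{Q-1}\lfloor C$; harmonicity of $H$ will follow from the defining property of $\G$, and the gradient bound from interior estimates for $\dh$-harmonic functions.

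First I would fix a bounded domain $G$ with $C\subset G\subset \overline G\subset D$ and regard $\dh f$ as a distribution on $G$. Since $f$ is Lipschitz, Proposition \ref{panulip} gives $\anah f\in L^\infty$, and integration by parts yields $\langle \dh f,\varphi\rangle=-\int \anah f\cdot \anah \varphi$ for every $\varphi\in C_c^\infty(G)$. To upgrade this distribution to a measure I would exploit that $\dh f=0$ on $D\stm C$: given $\varphi$, pick a smooth cutoff $\chi_\e$ of the $\e$-neighborhood of $C$ with $|\anah\chi_\e|\lesssim 1/\e$; then $\varphi(1-\chi_\e)$ is compactly supported in $G\stm C$ where $f$ is weakly $\dh$-harmonic, so only $\varphi\chi_\e$ contributes. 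Expanding by the product rule and invoking the Minkowski-content estimate $\mathcal{L}^{2n+1}(\{\dist(\cdot,C)<\e\})\lesssim \e\,\ha^{Q-1}(C)$, the $\e\to 0$ limit gives $|\langle\dh f,\varphi\rangle|\lesssim \text{Lip}(f)\,\ha^{Q-1}(C)\|\varphi\|_\infty$. Riesz representation then produces a finite signed Radon measure $\nu:=\dh f$ supported on $C$. Running the same estimate with $\varphi$ supported in an arbitrary open $V\subset G$ gives $|\nu|(V)\lesssim \ha^{Q-1}(C\cap V)$; by outer regularity and Radon--Nikodym, $\nu=-h\,\ha^{Q-1}\lfloor C$ with $\|h\|_{L^\infty(\ha^{Q-1}\lfloor C)}\lesssim \text{Lip}(f)$.

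Next I would set $U(p):=\int_C \G(q^{-1}\cdot p)h(q)\,d\ha^{Q-1}q$ and, using the fundamental-solution identity (\ref{fundconv}) together with the left-invariance of $\dh$ (under which $\dh$ commutes with left convolution), check that $\dh U=\nu$ distributionally on $G$. Then $H:=f-U$ is a distributional solution of $\dh H=0$ on $G$; since $\G\in L^1_{\text{loc}}$ forces $U\in L^1_{\text{loc}}$ and $f$ is continuous, $H\in L^1_{\text{loc}}(G)$, so by hypoellipticity of $\dh$ it is smooth and classically $\dh$-harmonic on $G$, and the identity $f=U+H$ holds on $G\stm C$ where $U$ is finite. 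For the bound on $\anah H$, I would run the whole construction on a slightly larger $G'$, $\overline G\subset G'\subset D$: since $f$ is bounded and $U\in L^1_{\text{loc}}$, $H\in L^1(G')$, and the interior gradient estimate for $\dh$-harmonic functions then gives $\|\anah H\|_{L^\infty(G)}\lesssim \|H\|_{L^1(G')}\lesssim 1$.

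The main obstacle, I expect, will be the first step: justifying rigorously that the condition $\dh f=0$ on $D\stm C$ lets one discard the contribution of $\varphi(1-\chi_\e)$ and that the $|\anah\chi_\e|$ term stays controlled as $\e\to 0$. Everything ultimately rests on the Minkowski-type volume bound for the $\e$-tube of $C$, which is precisely where the hypothesis $\ha^{Q-1}(C)<\infty$ enters; once this is in hand, the passage from distribution to measure, the identification of the density, and the construction of $H$ are comparatively formal.
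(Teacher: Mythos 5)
There is a genuine gap at the step you yourself identify as the crux: the volume bound
$\mathcal{L}^{2n+1}(\{\dist(\cdot,C)<\e\})\lesssim \e\,\ha^{Q-1}(C)$ is a statement about the upper $(Q-1)$-dimensional \emph{Minkowski content} of $C$, and it is \emph{not} implied by $\ha^{Q-1}(C)<\infty$. Finite Hausdorff measure controls efficient coverings at each scale, but says nothing about the measure of a full tubular neighborhood; there are compact sets with finite (even zero) $\ha^{s}$ measure whose $\e$-neighborhoods have volume far exceeding $\e^{\,\text{(codim)}}$ at a sequence of scales (already in $\R$, a suitably slowly convergent sequence of points gives a countable compact set with $\ha^{s}=0$ but $\mathcal{L}^1(C_\e)\gg\e^{1-s}$). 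Since your passage from the distribution $\dh f$ to a finite measure, your total-mass bound, and your local estimate $|\nu|(V)\lesssim\ha^{Q-1}(C\cap V)$ all rest on this tube estimate, the argument as written does not close. The repair is essentially what the paper does: instead of the $\e$-tube, take at each stage $m$ a finite cover of $C$ by balls $U_{m,j}=U(p_{m,j},r_{m,j})$ with $r_{m,j}\le 1/m$ and $\sum_j r_{m,j}^{Q-1}\le \ss^{Q-1}(C)+1/m$ (available directly from the definition of spherical Hausdorff measure), and work with $G_m=\bigcup_j U_{m,j}$; a cutoff subordinate to $G_m$ satisfies $\int|\anah\chi_m|\lesssim\sum_j r_{m,j}^{-1}r_{m,j}^{Q}\le \ss^{Q-1}(C)+1/m$, which is the estimate you actually need.

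Beyond that, your architecture is a legitimate alternative to the paper's. The paper mollifies a Lipschitz extension of $f$, applies the identity $-g=\G\ast\dh g$ to $g_m=\varphi f_m$, converts the integral over $G_m$ into a boundary integral over $\partial G_m$ via the Franchi--Serapioni--Serra Cassano divergence theorem (the surface mass being $\lesssim\sum_j r_{m,j}^{Q-1}$), and extracts a weak limit $\sigma$ supported on $C$ which is then shown to satisfy $\sigma\ll\ss^{Q-1}\lfloor C$ with bounded density; the harmonic part arises as an Arzel\`a--Ascoli limit of the explicitly harmonic pieces $H_m$. Your route (identify $\dh f$ directly as a measure, subtract its $\G$-potential, invoke hypoellipticity and interior gradient estimates for the remainder) is cleaner in its second half and avoids the divergence theorem, but the first half must be rebuilt on scale-by-scale coverings rather than on tubular neighborhoods. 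Your remaining steps (that $\dh U=-\nu$ distributionally, smoothness of $H$ by H\"ormander hypoellipticity, and $\|\anah H\|_\infty\lesssim\|H\|_{L^1(G')}$ by interior estimates on a slightly larger domain) are standard and sound, modulo keeping the sign conventions in (\ref{fundconv}) consistent.
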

\begin{proof}
It suffices to prove the theorem with $\ha^{Q-1}$ replaced by $\ss^{Q-1}$. Without loss of generality we can assume that $D$ is bounded. Let $D_1$ be some domain such that $C \subset D_1 \subset D$ and $\dist (\overline{D_1},\hn \stm D)>0$. For every $m=1,2,\dots$ there exists a finite number $j_m$ of balls $U_{m,j}:=U(p_{m,j},r_{m,j})$ such that $U_{m,j}\cap C \neq \emptyset$,
\begin{equation}
\label{gmmeas}
C \subset \bigcup_{j=1}^{j_m} U_{m,j} \subset D_1, \ \ r_{m,j} \leq \frac{1}{m}\ \text{and} \ \sum_{j=1}^{j_m} r_{m,j}^{Q-1} \leq \ss ^{Q-1} (C)+ \frac{1}{m}.
\end{equation}
Furthermore let $G_m = \cup_{j=1}^{j_m} U_{m,j}$ and $$0<\e_m <\min\{\dist(C,\hn \stm G_m),\dist (\overline{D_1},\hn \stm D) \}.$$
By the Whitney-McShane extension Lemma there exists a Lipschitz function $F:\hn \ra \R$ such that $F|_{D}=f$ and $F$ is bounded.

Let $J \in C^\infty_0 (\R^{2n+1}),J \geq 0$, such that $\spt J \subset B(0,1)$ and $\int J =1$. For any $\e>0$ let $J_\e(x)=\e^{-Q}J (\delta_{1/\e}(x))$. We consider the following sequence of mollifiers,
\begin{equation}
\label{mol}
\begin{split}
f_m(x):&=F \ast J_{\e_m}(x)=\int F(y) J_{\e_m}(x \cdot y^{-1})dy \\
&=\int F(y^{-1}\cdot x) J_{\e_m}(y)dy,
\end{split}
\end{equation}
for $x \in \hn$. Since $F$ is bounded and uniformly continuous
$$\|f_m-F\|_\infty \ra 0$$ 
and furthermore for all $m \in \N$, 
\begin{enumerate}
\item $f_m \in C^\infty$,
\item $\|\anah f_m\|_\infty \leq \|\anah F\|_\infty < \infty$,
\item $f_m$ is harmonic in the set $$D_{\e_m}:=\{p \in D \stm C : \dist(p, \partial (D \stm C))> \e_m\}.$$
\end{enumerate}
It follows from (iii) that every mollifier $f_m$ is harmonic in the set $D_1 \stm G_m$.
We continue by choosing another domain $D_2$ such that $G_m \subset D_2 \subset D_1$ for all $m=1,2,\dots$, and an auxiliary function $\varphi \in C_0^\infty(\R^{2n+1})$ such that
$$ \varphi  = \begin{cases}
       1 & \text{in} \ D_2\\
       0 & \text{in} \ \hn \stm \overline{D}_1.\\
     \end{cases}$$
For $m=1,2,\dots$ set $g_m:= \varphi f_m$ and notice that $g_m \in C_0^\infty(\R^{2n+1})$ and 
$$\|\anah g_m\|_\infty \leq A_1$$
where $A_1$ does not depend on $m$. It follows by (\ref{fundconv}) that for all $m \in \N$,
\begin{equation}
\label{convgm}
-g_m(p)=\G \ast \dh g_m (p)\ \text{for all} \  p \in \hn.
\end{equation}
Notice that
\begin{enumerate}
\item $g_m=0$ in $\hn \stm \overline{D_1}$,
\item $g_m=f_m$ in $D_2 \stm G_m$ and hence $\dh g_m=\dh f_m=0$ in $D_2 \stm G_m$.
\end{enumerate}
Therefore for all $m \in \N$ and $p \in D_2 \stm G_m$,
\begin{equation}
\label{finconv}
-f_m (p)=\int_{G_m} \G (q^{-1} \cdot p) \dh g_m (q) dq+\int_{\overline{D_1} \stm D_2} \G(q^{-1} \cdot p)  \dh g_m (q) dq
\end{equation}
by (\ref{convgm}).

For $m \in \N$ set $H_m: D_2 \ra \R$,
\begin{equation}
\label{hfun}
H_m(p)= -\int_{\overline{D_1} \stm D_2} \G(q^{-1} \cdot p)  \dh g_m (q) dq
\end{equation}
and $I_m:D_2 \stm G_m \ra \R,\ m=1,2,\dots$,
\begin{equation}
\label{imfun}
I_m (p)= -\int_{G_m} \G (q^{-1} \cdot p) \dh g_m (q) dq.
\end{equation}
Since the functions $\dh g_m$ are uniformly bounded in $\overline{D_1} \stm D_2$, for all $m \in \N$  
\begin{enumerate}
\item $H_m$ is harmonic in $D_2$,
\item $\|\anah H_m\|_\infty \lesssim 1$, since $\anah \G$ is locally integrable.
\end{enumerate}
The functions $H_m$ are $C^{\infty}$ by H\"ormander's theorem, see for example Theorem 1 in Preface of \cite{blu}. Thus we can apply 
Proposition \ref{panulip} and conclude from (ii) that $\text{Lip}_{\text{B}} (H_m) \lesssim 1$. 

The functions $I_m$ can be expressed as,
\begin{equation}
\label{im1}
I_m (p)=-\int_{G_m} \di_{\h,q} (\G (q^{-1} \cdot p) \anah g_m (q))dq+\int_{G_m}\langle \anah \G (p^{-1} \cdot q), \anah g_m (q) \rangle dq,
\end{equation}
where $\di_{\h,q}$ stands for the $\h$-divergence with respect to the variable $q$ and we also used the left invariance of $\anah$ and the symmetry of $\G$ to get that $$\nabla_{\h,q}(\G(q^{-1} \cdot p))=\nabla_{\h,q}(\G(p^{-1} \cdot q))=\anah \G (p^{-1} \cdot q).$$
By the Divergence Theorem of Franchi, Serapioni and Serra Cassano, see \cite{fss} (in particular Corollary 7.7 ), 
\begin{equation}
\label{divfss}
\begin{split}
-\int_{G_m} \di_{\h,q} (\G (q^{-1} \cdot p) &\anah g_m (q)) dq \\
&=A_2 \int _{\partial G_m} \G (q^{-1} \cdot p) \langle \anah g_m(q), \nu_m (q) \rangle b(q)d \ss^{Q-1}q
\end{split}
\end{equation}
where $\nu_m$ is an $\ss^{Q-1}$-measurable section of $H \hn$ such that $|\nu_m(q)|=1$ for $\ss^{Q-1}$-a.e $q \in G_m$ and $b$ is a non-negative Borel function such that $\|b\|_{L^\infty(\ss^{Q-1})} \leq A_3$.
\footnote{The divergence theorem in \cite{fss} is stated in terms of the spherical Hausdorff measure $\ss^{Q-1}_\infty$ with respect to the norm $\|p\|_\infty:= \max \{ |p'|, \sqrt{|p_{2n+1}|}\}$. Since the corresponding norm $d_\infty$ is globally equivallent to $d$ we get that $ \ss^{Q-1}<<\ss^{Q-1}_\infty << \ss^{Q-1}$ and the function $b$ is  the Radon-Nikodym derivative $\frac{d \ss^{Q-1}_\infty}{d \ss^{Q-1}}$.}

By (\ref{gmmeas}), $\mathcal{L}^{2n+1}(G_m)\ra 0$, therefore for $p \in D_2 \stm C$,
\begin{equation}
\label{zerolim}
\lim_{m \ra \infty}\left| \int_{G_m}\langle \anah \G (p^{-1} \cdot q), \anah g_m (q) \rangle dq \right| \ra 0,
\end{equation}
since $|\anah g_m|$ is uniformly bounded in $D_2$ and $\anah \G$ is locally integrable.

Notice that the signed measures,
\begin{equation}
\label{sigmes}
\sigma_m=A_2 \langle \anah g_m(\cdot), \nu_m (\cdot) \rangle b  \ss^{Q-1} \lfloor \partial G_m,
\end{equation}
have uniformly bounded total variations $\|\sigma_m\|.$
This follows by (\ref{gmmeas}), as
\begin{equation}
\label{smvar}
\begin{split}
\|\sigma_m \| &\leq A_2 \| \anah g_m\|_\infty \| b\|_{L^\infty(\ss^{Q-1})}\ss^{Q-1}(\partial G_m) \\ 
&\leq A_1  A_2 A_3 \sum_j \ss^{Q-1} (\partial U_{m,j}) \\
&= A_4 \sum_j \alpha (Q-1) r_{m,j}^{Q-1} \\
&\leq A_5( \ss^{Q-1}(C) + \frac{1}{m}),
\end{split}
\end{equation}
for $A_4 :=A_1A_2A_3$, $A_5=\a (Q-1)A_4$ and $\a(Q-1):= \ss^{Q-1} (\partial B(0,1)).$  Therefore, by a general compactness theorem, see e.g. \cite{afp}, we can extract a weakly converging subsequence $(\sigma_{m_k})_{k \in \N}$ such that
$$\sigma_{m_k}\ra \sigma.$$
Furthermore $\spt \sigma:=\spt|\sigma| \subset C$. To see this let $p \notin C$. Let $\delta=\dist (p,C)$ and choose $i_0$ big enough such that $1 /m_{i_0}<\delta /4 $. Then by (\ref{gmmeas}), $p \notin \partial G_{m_i}$ for all $i \geq i_0$. Since $\spt \sigma_{m_i} \subset \partial G_{m_i}$ and $B(p, \frac{\delta}{2}) \cap \overline{G_{m_i}}=\emptyset$, 
$$|\sigma|(U(p,\delta / 2)) \leq \liminf_{i \ra \infty} |\sigma_{m_i}| (U(p,\delta / 2))=0,$$ 
which implies that $p \notin \spt \sigma$.

Notice also that by (\ref{smvar})
\begin{equation}
\label{sigvar}
\|\sigma\| \leq \liminf_{k \ra \infty} \|\sigma_{m_k}\| \leq A_5 \ss^{Q-1} (C).
\end{equation}

Finally combining (\ref{im1})-(\ref{sigmes}) we get that for $p \in D_2 \stm C$,
$$\lim_{k \ra \infty} I_{m_k} (p)= \int_C \G(q^{-1} \cdot p) d \sigma q$$
and by (\ref{finconv})-(\ref{imfun})
$$f(p)=\int_C \G(q^{-1} \cdot p) d \sigma q+\lim_{k \ra \infty} H_{m_k}(p).$$
Since the sequence of harmonic functions $(H_{m_k})$ is equicontinuous on compact subsets of $D_2$, the Arzela-Ascoli theorem implies that there exists a subsequence $(H_{m_{k_l}})$ which converges uniformly on compact subsets of $D_2$. From the Mean Value Theorem for sub-Laplacians and its converse, see \cite{blu}, Theorems 5.5.4 and 5.6.3, we deduce that $(H_{m_{k_l}})$ converges to a function $H$ which is harmonic in $D_2$. Therefore for $p \in D_2 \stm C$,
$$f(p)=\int_C \G(q^{-1} \cdot p) d \sigma q+H (p).$$
Furthermore the function $H$ is $C^{\infty}$ in $D_2$ with $\text{Lip}_{\text{B}}(H) \lesssim 1$, therefore by Proposition \ref{panulip} 
$$\|\anah H\|_\infty \lesssim 1.$$

Set $\mu= \ss^{Q-1} \lfloor C$. In order to complete the proof it suffices to show that 
\begin{equation}
\label{abscont}
\sigma  \ll \mu \ \text{and} \ h:=\frac{d \sigma}{d \mu} \in L^{\infty} (\mu).
\end{equation}
The proof of (\ref{abscont}) is almost identical with the proof appearing in \cite{mpa} but we provide the details for completeness.
It is enough to prove that for every open ball $U$ and its closure $\overline{U}$
\begin{equation}
\label{difsm}
|\sigma| (U) \leq A_5 \mu (\overline{U}).
\end{equation}
Then from (\ref{difsm}) we deduce that for any closed ball $B$ and open balls $U_i \supset B, U_i \ra B$,
\begin{equation}
|\sigma| (B) \leq \lim_{i \ra \infty} |\sigma|(U_i) \leq \lim_{i \ra \infty}  A_5 \mu (\overline{U_i})=A_5 \mu (B),
\end{equation}
which implies (\ref{abscont}).

Suppose that there exist an open ball $U$ and a positive number $\e$ such that
\begin{equation}
\label{contr}
|\s (U)|  >A_5 (\mu (\overline{U})+\e).
\end{equation}
In case $C \subset \cu$, (\ref{sigvar}) implies that $|\sigma| (U) \leq A_5 \mu (\overline{U})$ therefore we can assume that $C\stm \cu \neq \emptyset$. There exists a compact set $F$ such that 
\begin{equation}
 \label{ac12}
 F \subset C \stm \cu \ \text{and} \ \mu(F) >\mu(C\stm \cu)-\frac{\e}{4}.
\end{equation}
Let $\de_\e:=\dist(F, \cu)$ and choose $k \in \N$ large enough such that 
$1/ m_k  <\min \{\de_\e / 4,\e / 2 \}$. Then by (\ref{gmmeas})
\begin{equation}
\label{ac34}
\max_{j\leq j_{m_k}} r_{m_k,j} \leq \frac{1}{m_k} < \frac{\de_\e}{4}\ \text{and} \ \sum_{j=1}^{j_{m_k}} r_{m_k,j}^{Q-1} \leq \mu (C)+ \frac{1}{m_k}.
\end{equation}
Let
$$J_k^1=\{j: U_{m_k,j} \cap \cu \neq \emptyset \}, \ J_k^2=\{j: U_{m_k,j} \cap F \neq \emptyset \}.$$
It follows that $F \subset \cup_{j \in J_k^2} U_{m_{k,j}}$, therefore $\sum_{j \in J_k^2}  r_{m_{k,j}}^{Q-1} \geq \ss_{1/m_k}^{Q-1} (F)$. Choosing $k$ large enough
\begin{equation}
\label{ac5}
\sum_{j \in J_k^2}  r_{m_{k,j}}^{Q-1} \geq \mu (F)-\frac{\e}{4}.
\end{equation}
It also holds that
$$\overline{U}_{m_k,j_1} \cap \overline{U}_{m_k,j_2}= \emptyset \ \text{for} \ j_1 \in J_{k}^1,j_2 \in J_{k}^2.$$
Therefore for $k$ large enough, by (\ref{ac34}),
$$ \sum_{j \in J_k^1}  r_{m_{k,j}}^{Q-1}+\sum_{j \in J_k^2}  r_{m_{k,j}}^{Q-1} \leq \sum_{j=1}^{j_{m_k}}  r_{m_{k,j}}^{Q-1} \leq \mu (C) + \frac{\e}{2},$$
and by (\ref{ac5}) and (\ref{ac12})
\begin{equation}
\begin{split}
\label{325}
\sum_{j \in J_k^1}  r_{m_{k,j}}^{Q-1} &\leq \mu(C)-\mu(F)+\frac{3 \e}{4} \\
&<\mu(C)-\mu(C \stm \cu)+\e\\
&=\mu (\cu)+\e.
\end{split}
\end{equation}
For all $k \in \N$ large enough by the definition of $\s_{m_k}$, (\ref{sigmes}), and (\ref{325}) we see as in (\ref{smvar}) that 

\begin{equation*}
\begin{split}
|\s_{m_k}|( U) &\leq |\s_{m_k}|(\bigcup_{j \in J_k^1}\overline{U}_{m_{k,j}}) \\
&\leq A_4 \sum_{j \in J_k^1} \ss^{Q-1}( \partial U_{m_{k,j}}) \\
&= A_4 \sum_{j \in J_k^1} \a (Q-1) r_{m_k,j}^{Q-1} \\
&\leq A_5 (\mu (\cu)+\e).
\end{split}
\end{equation*}
Since $\s_{m_k} \ra \s$, we deduce that
$$|\s|(U) \leq \liminf_{k \ra \infty} |\s_{m_k}|(U)\leq A_5(\mu (\overline{U})+\e)$$
which contradits (\ref{contr}) and thus the proof is complete.
\end{proof}

The following theorem, with $Q$ replaced by $n$, is also valid for Lipschitz harmonic functions in $\Rn$.
\begin{thm}
\label{pos}
Let $C$  be a compact subset of $\hn$.
\begin{enumerate} 
\item If $\mathcal{H}^{Q-1}(C)=0$, $C$ is removable.
\item If $\dim C>Q-1$, $C$ is not removable.
\end{enumerate}
\end{thm}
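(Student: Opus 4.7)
The plan is to handle the two parts separately. Part (i) will be essentially immediate from the representation Theorem \ref{main}, while Part (ii) requires an explicit construction using a Frostman measure and its $\dh$-potential.

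For Part (i), let $f:D\to\R$ be Lipschitz with $\dh f=0$ on $D\setminus C$, and $\mathcal{H}^{Q-1}(C)=0$. Apply Theorem \ref{main} (its hypothesis $\mathcal{H}^{Q-1}(C)<\infty$ is trivially satisfied) to obtain a domain $G$ with $C\subset G\subset D$, a bounded Borel function $h$ and a $\dh$-harmonic $H:G\to\R$ with
$$f(p)=\int_C \G(q^{-1}\cdot p)h(q)\,d\mathcal{H}^{Q-1}q+H(p)\quad\text{for } p\in G\setminus C.$$
The integral vanishes since $\mathcal{H}^{Q-1}\lfloor C$ is the zero measure, so $f=H$ on $G\setminus C$. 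Since $\dim C\leq Q-1<Q$, the set $C$ has zero Haar measure, hence empty interior, so $G\setminus C$ is dense in $G$. Continuity of $f$ and $H$ then gives $f=H$ on all of $G$, so $\dh f=0$ on $G$, and combined with the hypothesis on $D\setminus C$, on all of $D$.

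For Part (ii), pick $s\in(Q-1,\dim C)$. Frostman's lemma in the doubling metric space $(\hn,d)$ supplies a nonzero finite Radon measure $\mu$ with $\spt\mu\subset C$ and $\mu(B(p,r))\leq r^s$ for all $p\in\hn$, $r>0$. Set
$$f(p):=\int \G(q^{-1}\cdot p)\,d\mu q.$$
I would show that $f$ is Lipschitz on any ball containing $C$ and $\dh$-harmonic on $\hn\setminus C$, but that $\dh f=-\mu$ as a distribution; taking $D$ to be any open ball containing $C$, this contradicts removability. The central estimate is the gradient bound: from $|\anah\G(p)|\lesssim\|p\|^{1-Q}$ (Proposition \ref{stanker}(i)) and the layer-cake computation
$$|\anah f(p)|\lesssim\int d(p,q)^{1-Q}\,d\mu q\,=\,(Q-1)\int_0^\infty t^{-Q}\mu(B(p,t))\,dt\lesssim 1,$$
uniformly in $p$; the inequality $s>Q-1$ is precisely what ensures integrability of $t^{s-Q}$ at $0$, while the tail is controlled by the finite mass of $\mu$. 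Proposition \ref{panulip} then upgrades this to Lipschitz continuity on any fixed ball. Classical $\dh$-harmonicity off $\spt\mu$ follows by differentiating under the integral, since $\G$ is smooth and $\dh$-harmonic away from the origin. For the distributional identity, Fubini plus the symmetry $\G(p^{-1})=\G(p)$ (immediate from $\|p^{-1}\|=\|p\|$) lets me rewrite $\G(q^{-1}\cdot p)=\G(p^{-1}\cdot q)$ and invoke (\ref{fundconv}) to conclude $\int f\,\dh\varphi\,dp=-\int\varphi\,d\mu$ for every $\varphi\in C_c^\infty(\hn)$.

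The main obstacle is the Lipschitz property of $f$. The Frostman exponent $s>Q-1$ is exactly the homogeneity degree of the kernel $K=\anah\G$, so the gradient estimate is tight. Translating the pointwise bound on $\anah f$ (a priori valid only off $\spt\mu$) into a genuine Lipschitz bound that also handles pairs $(p,p')$ with endpoints in $C$ takes an additional argument, handled by splitting $|f(p)-f(p')|$ into the contribution from $\mu$-mass inside $B(p,2d(p,p'))$ (controlled by the analogous Frostman estimate on $\int d(p,q)^{2-Q}\,d\mu q$, convergent since $s>Q-2$) and from the complement (controlled along a sub-unit curve from $p$ to $p'$ using the gradient bound, as in the proof of Proposition \ref{stanker}(iii)).
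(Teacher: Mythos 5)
Your proof is correct and, for part (i), coincides with the paper's: apply Theorem \ref{main}, observe that the potential term vanishes because $\mathcal{H}^{Q-1}\lfloor C=0$, and conclude $f=H$ on $G$ and hence harmonicity on $D$. For part (ii) you follow the paper's construction (a Frostman measure $\mu$ with exponent $s\in(Q-1,\dim C)$ supported on $C$ and the potential $f(p)=\int \G(q^{-1}\cdot p)\,d\mu q$, with the Lipschitz bound coming from the Frostman condition and the kernel estimates), but you finish genuinely differently: the paper notes that $f\geq 0$ is nonconstant and invokes the Liouville theorem for sub-Laplacians (Theorem 5.8.1 of \cite{blu}) to conclude $\dh f\not\equiv 0$ on $C$, whereas you compute $\dh f=-\mu$ distributionally via Fubini, the symmetry $\G(p)=\G(p^{-1})$ and (\ref{fundconv}). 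Your ending is more elementary and more robust — it does not use positivity of $\G$ and would survive sign-changing kernels — at the modest cost of justifying Fubini and appealing to hypoellipticity to identify distributional with classical harmonicity (which the paper records after Definition \ref{harmdef}). Two small points on the Lipschitz step: the parenthetical ``convergent since $s>Q-2$'' is not quite the right justification, since mere convergence of $\int d(p,q)^{2-Q}\,d\mu q$ would only give a H\"older bound; what you need, and what $s>Q-1$ delivers, is the local estimate $\int_{B(p,2\delta)}d(p,q)^{2-Q}\,d\mu q\lesssim \delta^{s+2-Q}\lesssim\delta$ for $\delta=d(p,p')$ bounded. Also, Proposition \ref{panulip} applied on $\hn\setminus\spt\mu$ only controls $\text{Lip}_{\text{B}}$ there, so the near/far splitting you sketch at the end is indeed necessary; it amounts to the same two-case estimate the paper imports wholesale from the proof of Proposition \ref{stanker}(iii), which yields $|\G(q^{-1}\cdot p)-\G(q^{-1}\cdot p')|\lesssim d(p,p')\bigl(d(p,q)^{1-Q}+d(p',q)^{1-Q}\bigr)$ for all $q$ and dispenses with the splitting in one stroke.
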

\begin{proof} The first statement follows from Theorem \ref{main}. To see this let $D \supset C$ be a subdomain of $\hn$. Applying the previous Theorem we deduce that if $f:D \ra \R$ is Lipschitz in $D$ and $\dh$-harmonic in $D \stm C$ there exists a $\dh$-harmonic function $H$ in a domain $G, C\subset G \subset D$ such that
$$f(p)=-H(p) \ \text{for} \ p\in G\stm C.$$
This implies that $f=H$ in $G$. Hence $f$ is harmonic in $G$ (and so also in $D$). Therefore $C$ is removable.

In order to prove (ii) let $Q-1<s<\dim C$. By Frostman's lemma in compact metric spaces, see \cite{M}, there exists a Borel measure $\mu$ with $\spt \mu \subset C$ such that
$$\mu (B(p,r))\leq r^s \ \text{for} \ p \in \hn, r>0.$$
We define $f:\hn \ra \R^+$ as
$$f(p)= \int \G(q^{-1}\cdot p) d \mu q.$$
It follows that $f$ is a nonconstant function which is $C^\infty$ in $\hn \stm C$ and $$\dh f=0 \ \text{on} \ \hn \stm C.$$
Furthermore $f$ is Lipschitz: For $p_1,p_2 \in \hn$ exactly as in the proof of Proposition \ref{stanker}, we obtain
\begin{equation*}
\begin{split}
|f(p_1)-f(p_2)|&=\left| \int \G(q^{-1}\cdot p_1) d \mu q -\int \G(q^{-1}\cdot p_2) d \mu q \right| \\
&\lesssim d(p_1,p_2)\left(\int \frac{1}{d(p_1,q)^{Q-1}}d\mu q+\int \frac{1}{d(p_2,q)^{Q-1}}d\mu q \right) \\
&\lesssim d(p_1,p_2).
\end{split}
\end{equation*} To prove the last inequality let $p \in \hn$, and consider two cases. If $\dist(p,C)>\diam(C)$, 
$$ \int \frac{1}{d(p,q)^{Q-1}}d\mu\leq \frac{ \mu(C)}{\diam(C)^{Q-1}} \lesssim 1.$$
If $\dist(p,C)\leq \diam(C)$, then $C \subset B(p,2\diam(C))$. Let $A=2\diam(C)$, then
\begin{equation*}
\begin{split}
\int \frac{1}{d(p,q)^{Q-1}}d\mu &\leq \sum_{j=0}^\infty \int_{B(p,2^{-j}A)\stm B(p,2^{-(j+1)}A)} \frac{d \mu q}{d(p,q)^{Q-1}}\\
& \leq \sum_{j=0}^\infty \frac{\mu(B(p,2^{-j}A))}{(2^{-(j+1)}A)^{Q-1}}\\
&\leq 2^{Q-1}A^{s-(Q-1)}\sum_{j=0}^\infty (2^{s-(Q-1)})^{-j} \lesssim 1.
\end{split}
\end{equation*}
Since $f \geq 0$ by a Liouville-type theorem for sub-Laplacians, see Theorem 5.8.1 of \cite{blu}, we deduce that $\dh f \not\equiv 0$ on $C$ and hence it is not removable.
\end{proof}

In the following we fix some notation. 

\begin{notation}
\label{singnot}
Recalling (\ref{lapkernels}), (\ref{omega}) and (\ref{vecker}) for a signed Borel measure $\sigma$ set
$$T_\sigma (p):=\int K(q^{-1}\cdot p)d \sigma q, \ \text{whenever it exists},$$
$$T^\e_\sigma (p):=\int_{\hn \stm B(p,\e)} K(q^{-1}\cdot p)d \sigma q$$
and
$$T^\ast_\sigma (p):=\sup_{\e>0}|T^\e_\sigma (p)|.$$
\end{notation}

\begin{rem}
\label{verplane} Vertical hyperplanes of the form $\{(x,t) \in \hn: x\in W, t\in\R\}$, where $W$ is a linear hyperplane 
in $\R^{2n}$, are homogeneous subgroups of $\hn$, that is, they are closed subgroups invariant under the dilations 
$\delta_r$. Their Hausdorff dimension is $Q-1$. 
If $V$ is any such vertical hyperplane and $\sigma$ denotes the $(Q-2)$-dimensional Lebesgue measure on $V$ it follows 
by \cite{Ste}, Theorem 4 p.623 and essentially Corollary 2 p.36, that $T^\ast_\sigma$ is bounded in $L^2(\sigma)$. This 
implies, for example by the methods used in \cite{mpa}, that 
the subsets of vertical hyperplanes of positive measure are not removable for Lipschitz harmonic functions.
\end{rem}

The proof of the following lemma is rather similar to that of Lemma 5.4 in \cite{mpa}.
\begin{lm}
\label{semmes}
Let $\sigma$ be a signed Borel measure in $\hn$ and $A_\sigma$ a positive constant such that
 $|\sigma|(B(p,r)) \leq A_\sigma r^{Q-1}$ for $ p \in \hn, r>0$. Then 
 $$|T^*_\sigma (p)| \leq \|T_\sigma\|_\infty+A_T \ \text{for} \ p \in \hn,$$
 where $A_T$ is a constant depending only on $\sigma$.
\end{lm}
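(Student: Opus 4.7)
The plan is a standard Cotlar-type comparison: for each $p \in \hn$ and $\e > 0$, I want to pick a good test point $q \in B(p,\e/2)$ at which $T_\sigma$ is defined as a principal value and satisfies $|T_\sigma(q)| \leq \|T_\sigma\|_\infty$, and then exploit the identity
$$T^\e_\sigma (p) - T_\sigma (q) = A(q) + B(q),$$
where
$$A(q) = \int_{\hn \setminus B(p,\e)} \bigl[K(r^{-1}\cdot p) - K(r^{-1}\cdot q)\bigr]\, d\sigma r,\quad B(q) = -\,\mathrm{p.v.}\!\int_{B(p,\e)} K(r^{-1}\cdot q)\, d\sigma r.$$
The aim is to bound $|A(q)|$ uniformly on $B(p,\e/2)$ and to show that $|B(q)|$ is bounded on a positive-Lebesgue-measure subset of $B(p,\e/2)$, then to intersect with the almost-everywhere set where $T_\sigma$ is defined with $|T_\sigma| \leq \|T_\sigma\|_\infty$.

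For the far-field piece $A(q)$, since $d(p,q)\leq \e/2$ and $d(p,r) \geq \e$ on the integration domain, the triangle inequality gives $d(q,r) \geq d(p,r)/2$, so Proposition~\ref{stanker}(iii) (with the roles $P=r$, $Q_1=p$, $Q_2=q$) yields
$$|K(r^{-1}\cdot p) - K(r^{-1}\cdot q)| \lesssim \frac{d(p,q)}{d(p,r)^Q}.$$
A dyadic annular decomposition $\hn \setminus B(p,\e) = \bigcup_{j\geq 0} \bigl(B(p,2^{j+1}\e)\setminus B(p,2^j\e)\bigr)$, combined with the growth assumption $|\sigma|(B(p,r)) \leq A_\sigma r^{Q-1}$, bounds $\int_{\hn\setminus B(p,\e)} d|\sigma|(r)/d(p,r)^Q$ by a geometric series that sums to $\lesssim A_\sigma/\e$. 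Multiplying by $d(p,q) \leq \e/2$ gives $|A(q)| \lesssim A_\sigma$ for every $q \in B(p,\e/2)$.

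For the near-field piece $B(q)$ I plan an averaging argument over $q \in B(p,\e/2)$ with respect to the $(2n+1)$-dimensional Lebesgue measure. By Fubini,
$$\int_{B(p,\e/2)}\int_{B(p,\e)} |K(r^{-1}\cdot q)|\, d|\sigma|(r)\, dq = \int_{B(p,\e)}\biggl(\int_{B(p,\e/2)} |K(r^{-1}\cdot q)|\, dq\biggr) d|\sigma|(r).$$
For fixed $r \in B(p,\e)$, the left-invariant change of variable $u = r^{-1}\cdot q$ (which preserves Lebesgue measure) together with the Heisenberg dilation identity $\int_{B(0,R)} \|u\|^{-(Q-1)}\, d\mathcal{L}^{2n+1}(u) \lesssim R$ (finite precisely because $Q-1 < Q$) bounds the inner integral by $\lesssim \e$. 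Using $|\sigma|(B(p,\e)) \leq A_\sigma \e^{Q-1}$ and $\mathcal{L}^{2n+1}(B(p,\e/2)) \simeq \e^Q$, the $q$-average is $\lesssim A_\sigma$, independently of $\e$, and Chebyshev produces a set $E \subset B(p,\e/2)$ of Lebesgue measure at least $\tfrac12 \mathcal{L}^{2n+1}(B(p,\e/2))$ on which $\int_{B(p,\e)} |K(r^{-1}\cdot q)|\, d|\sigma|(r) \lesssim A_\sigma$. On $E$ the principal value defining $B(q)$ collapses to an absolutely convergent integral, so $|B(q)| \lesssim A_\sigma$ there.

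Because the set where $T_\sigma(q)$ is defined with $|T_\sigma(q)| \leq \|T_\sigma\|_\infty$ has full Lebesgue measure, it meets $E$; picking any $q$ in the intersection yields $|T^\e_\sigma(p)| \leq \|T_\sigma\|_\infty + |A(q)| + |B(q)| \leq \|T_\sigma\|_\infty + A_T$ with $A_T$ depending only on $A_\sigma$ and the constants hidden in Proposition~\ref{stanker}, after which a supremum over $\e>0$ finishes the proof. The main obstacle I expect is that for arbitrary $q$ the near-field integral $B(q)$ is genuinely singular --- the $(Q-1)$-homogeneous kernel $K$ is critical against the $(Q-1)$-dimensional-growth measure $|\sigma|$ --- so no pointwise absolute bound on $B(q)$ can be hoped for. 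The averaging step circumvents this precisely because Lebesgue measure is one homogeneous dimension ``fatter'' than $|\sigma|$, which is exactly the dimensional gap that makes the inner integral $\int_{B(p,\e/2)}|K(r^{-1}\cdot q)|\,dq$ both finite and of the right size in $\e$.
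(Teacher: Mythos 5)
Your proposal is correct and follows essentially the same route as the paper's proof: the same Cotlar-type comparison $T^\e_\sigma(p)-T_\sigma(z)$, the same far-field bound via Proposition \ref{stanker}(iii) and dyadic annuli against the growth condition, and the same Fubini/averaging over $B(p,\e/2)$ with Lebesgue measure (exploiting the one-dimension gap between $\mathcal{L}^{2n+1}$ and $|\sigma|$) to select a good point $z$ where both $|T_\sigma(z)|\leq\|T_\sigma\|_\infty$ and the near-field integral is controlled. The only cosmetic difference is that you make the Chebyshev/full-measure intersection step explicit where the paper simply asserts the existence of such a $z$.
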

\begin{proof} We can assume that $L=\|T_\s\|_\infty< \infty$. The constants $A_i$ that will appear in the following depend only on $n$ and $\sigma$. For $\e>0$ and $p \in \hn$,
\begin{equation*}
\begin{split}
\frac{1}{\mathcal{L}^{2n+1}(B(p,\e / 2))}&\int_{B(p, \e / 2)} \int_{B(p, \e)}\frac{1}{\|q^{-1} \cdot z\|^{Q-1}}d | \sigma | q d z \\
&\approx  \e^{-Q} \int_{B(p, \e / 2)} \int_{B(p, \e)}\frac{1}{\|q^{-1} \cdot z\|^{Q-1}}d | \sigma | q d z \\
&\leq \int_{B(p, \e)} \e^{-Q} \int_{B(q,2 \e)} \frac{dz}{\|q^{-1} \cdot z\|^{Q-1}}d | \sigma | q \\
&\approx  \e^{1-Q} |\s| (B(p,\e))\leq A_\sigma
\end{split}
\end{equation*}
where we used Fubini and that $$\int_{B(q,2 \e)} \frac{dz}{\|q^{-1} \cdot z\|^{Q-1}} \approx  \e,$$ which is easily checked by summing over the annuli $B(q,2^{1-i} \e) \stm B(q,2^{-i} \e), i=0,1,\dots$.

Now because of the inequality established above we can
choose $z \in B(p, \e / 2)$ with $|T_\sigma(z)| \leq L$ such that 
\begin{equation*}
\int_{B(p, \e)}  |K(q^{-1}\cdot z)|d |\s|q \lesssim \int_{B(p, \e)}\frac{1}{\|q^{-1} \cdot z\|^{Q-1}}d | \sigma | q \leq A_6.
\end{equation*}
Therefore,
\begin{equation*}
\begin{split}
|T_\s^\e(p)-T_\s(z)|&=\left| \int_{\hn \stm B(p, \e)} K(q^{-1}\cdot p)d |\s|q -\int  K(q^{-1}\cdot z)d |\s|q\right| \\
&\leq \int_{\hn \stm B(p, \e)} |K(q^{-1}\cdot p)-K(q^{-1}\cdot z)|d |\s|q+ \int_{B(p, \e)}  |K(q^{-1}\cdot z)|d |\s|q \\
&\leq \int_{\hn \stm B(p, \e)} |K(q^{-1}\cdot p)-K(q^{-1}\cdot z)|d |\s|q +A_6.
\end{split}
\end{equation*}
Furthermore, by Proposition \ref{stanker} (iii), as  $z \in B(p, \e / 2)$,
\begin{equation*}
\begin{split}
\int_{\hn \stm B(p, \e)} &|K(q^{-1}\cdot p)-K(q^{-1}\cdot z)|d |\s|q \\
&\lesssim\int_{\hn \stm B(p, \e)} \max \left\{ \frac{d(p,z)}{d(p,q)^Q},\frac{d(p,z)}{d(z,q)^Q } \right\} d |\s|q\\
&\leq \int_{\hn \stm B(p, \e)} \frac{d(p,z)}{d(p,q)^Q } d |\s|q +\int_{\hn \stm B(z, \e/2)} \frac{d(p,z)}{d(z,q)^Q } d |\s|q 
\end{split}
\end{equation*}
Since
\begin{equation*}
\begin{split}
\int_{\hn \stm B(p, \e)} \frac{d(p,z)}{d(p,q)^Q } d |\s|q &\leq  \frac{\e}{2} \sum_{j=0}^\infty \int_{B(p,2^{j+1} \e) \stm B(p,2^{j} \e) } \frac{1}{d(p,q)^Q } d |\s|q \\
&\leq  \frac{\e}{2} \sum_{j=0}^\infty  \frac{|\s|(B(p,2^{j+1} \e))}{(2^{j} \e)^Q}  \\
&\leq  A_\s\frac{\e}{2} \sum_{j=0}^\infty  \frac{(2^{j+1} \e)^{Q-1}}{(2^{j} \e)^Q} \\
&=A_\s 2^{Q},
\end{split}
\end{equation*}
and in the same way, 
\begin{equation*}
\int_{\hn \stm B(z, \e/2)} \frac{d(p,z)}{d(z,q)^Q } d |\s|q \leq A_\s 2^{Q+1},
\end{equation*}
we deduce that 
\begin{equation*}
\int_{\hn \stm B(p, \e)} |K(q^{-1}\cdot p)-K(q^{-1}\cdot z)|d |\s|q \leq A_7.
\end{equation*}
Therefore
$$|T_\s^\e(p)|\leq|T_\s^\e(p)-T_\s(z)|+|T_\s(z)|\leq A_6 +A_7+L.$$
The lemma is proven.
\end{proof}

\section{$\dh$-removable Cantor sets in $\hn$}

In this section we shall construct a self-similar Cantor set $C$ in $\hn$ which is removable although $0 <\ha^{Q-1}(C) < \infty$. The construction is similar to the one used in \cite{CM} and it is based on ideas of Strichartz in \cite{St}. Notice that in Theorem \ref{remcan} there is one piece $S_0 (C_{r,N})$ of $C_{r,N}$ well separated from the others. This is in order to make the condition of Theorem \ref{unb} easily checkable. It is almost sure that also the more symmetric example used in \cite{CM} would satisfy that condition, but the calculation would become much more complicated.
\begin{df}
\label{similitudes} Let $Q=[0,1]^{2n} \subset \R^{2n},r >0, N \in 2\N$ be such that $r<\frac{1}{N}<\frac{1}{2}$. Let $z_j \in \R^{2n}, j=1,...,N^{2n},$ be distinct
points such that $z_{j,i} \in \{\frac{l}{N}:l=0,1,\cdots,N-1\}$ for all $j=1,\cdots,N^{2n}$ and
$i=1,..,2n$. 

The similarities $\mathcal{S}_{r,N}=\{S_0,..,S_{\frac{1}{2}N^{2n+2}}\}$, depending on the parameters $r$ and $N$, are defined as follows,
\begin{equation*}
\begin{split}
S_0&=\delta_r, \\
S_j&=\t_{(z_{\lfloor j \rfloor_ {N^{2n}}},\frac{1}{2}+\frac{i}{N^2})}\circ\delta_r, \text{ for }i=0,\cdots, \frac{N^2}{2}-1\ \text{and} \ j=iN^{2n}+1,\cdots,(i+1)N^{2n}.\\
\end{split}
\end{equation*}
where $\lfloor j \rfloor_m:=j \mod m$.
\end{df}

\begin{thm}
\label{remcan}
Let $C_{r,N}$ be the self-similar set defined by,
$$C_{r,N}=\bigcup_{j=0}^{\frac{1}{2}N^{2n+2}}S_j (C_{r,N}).$$
Then there exists a set $R \supset C_{r,N}$ such that for all
$j=0,\dots,\frac{1}{2}N^{2n+2}$,
\begin{enumerate}
\item $S_j(R) \subset R$ and
\item the sets $S_j(R)$ are disjoint.
\end{enumerate}
This implies that the sets $S_j(C_{r,N})$ are disjoint for $j=0,\dots,\frac{1}{2}N^{2n+2}$ and
$$0< \mathcal{H}^{a}(C_{r,N})< \infty \text{ with }a=\frac{\log(\frac{1}{2}N^{2n+2}+1)} {\log(\frac{1}{r})}.$$
Furthermore the measure $\mathcal{H}^{a}\lfloor C_{r,N}$ is $a$-AD regular.
\end{thm}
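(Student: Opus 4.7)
The plan is to construct $R$ explicitly as a vertical slab $R=[0,1]^{2n}\times[-M,M]$ with $M>0$ chosen depending on $r$ and $N$, and to check properties (i) and (ii) directly. Once those are in hand, $C_{r,N}\subset R$ follows automatically because $R$ is closed and $\bigcup_j S_j(R)\subset R$ implies that the Hutchinson iterates $F^k(R)$ decrease in the Hausdorff metric to the attractor $C_{r,N}$; the disjointness of the $S_j(C_{r,N})$ is then immediate from (ii), and the Hausdorff-measure assertion follows from Schief's theorem \cite{s}.

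The heart of the argument is the computation of $S_j(R)$ using the Heisenberg group law. For $p\in R$, $S_0(p)=(rp',r^2p_{2n+1})$, so $S_0(R)=[0,r]^{2n}\times[-r^2M,r^2M]$. For $j\ge 1$, writing $z=z_{\lfloor j\rfloor_{N^{2n}}}$ and $h_i=\tfrac12+\tfrac{i}{N^2}$, the horizontal part of $S_j(p)$ is $z+rp'$ and the vertical part is $h_i+r^2p_{2n+1}-2r\sum_{k=1}^n(z_kp_{k+n}-z_{k+n}p_k)$. Since $|z_k|,|z_{k+n}|\le (N-1)/N$ and $p'\in[0,1]^{2n}$, the twist sum is bounded by $n$ in absolute value, so the vertical part of $S_j(p)$ lies in $[h_i-r^2M-2rn,\,h_i+r^2M+2rn]$. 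I would now choose $M$ so that two conditions hold: a containment bound $M(1-r^2)\ge 1-1/N^2+2rn$ (worst case $i=N^2/2-1$), and a separation bound $2(r^2M+2rn)<1/N^2$ (forcing disjoint vertical ranges across different $i$-blocks).

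With $M$ so chosen, pairwise disjointness of $\{S_j(R)\}$ splits into three cases. If $S_j,S_{j'}$ share the same block index $i$ but have distinct horizontal shifts $z\neq z'$, their horizontal images $z+[0,r]^{2n}$ and $z'+[0,r]^{2n}$ sit in different cells of the $1/N$-grid and are disjoint because $r<1/N$. If $i\neq i'$, the vertical ranges are disjoint by the separation bound. Finally, $S_0(R)$ has vertical part in $[-r^2M,r^2M]$ while any $S_j(R)$, $j\ge 1$, has vertical part bounded below by $1/2-r^2M-2rn$, and the separation bound is strictly stronger than the inequality $2(r^2M+2rn)<1/2$ that is needed here. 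Once (i) and (ii) are established, Schief's theorem yields $0<\ha^a(C_{r,N})<\infty$ and the $a$-AD regularity of $\ha^a\lfloor C_{r,N}$, with $a$ obtained from $(\tfrac12 N^{2n+2}+1)r^a=1$, i.e., $a=\log(\tfrac12 N^{2n+2}+1)/\log(1/r)$.

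The main technical point is the Heisenberg twist term $-2r\sum(z_kp_{k+n}-z_{k+n}p_k)$, which injects an order-$r$ (not order-$r^2$) vertical spread into each $S_j(R)$ and therefore couples the containment and separation bounds on $M$; the analogue in the Euclidean (abelian) setting would only see the much weaker $r^2M$ contribution. The remainder, namely checking that the two bounds on $M$ can be met simultaneously in the admissible parameter range and running the case analysis above, is a direct calculation from the group law.
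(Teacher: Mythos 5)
Your flat-slab ansatz $R=[0,1]^{2n}\times[-M,M]$ is a genuinely different route from the paper (which builds a \emph{curved} slab of unit thickness over the graph of a function $\f:Q\ra\R$ solving a self-similarity functional equation), but it has a genuine gap: the two constraints you impose on $M$ are incompatible throughout the admissible parameter range $r<\tfrac1N<\tfrac12$ of Definition \ref{similitudes}, so your argument does not prove the theorem as stated. Concretely, your containment bound forces $M\geq\frac{1-1/N^2+2rn}{1-r^2}\geq\tfrac34$, while your separation bound demands $2(r^2M+2rn)<\tfrac1{N^2}$; the latter already fails on account of the twist contribution alone, since $4rn>\tfrac1{N^2}$ whenever $r>\tfrac{1}{4nN^2}$ (e.g.\ $r=\tfrac1{2N}$ gives $4rn=\tfrac{2n}{N}\gg\tfrac1{N^2}$). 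Thus the flat slab only works when $r\lesssim\tfrac{1}{nN^2}$, a much smaller range than the $r<\tfrac1N$ the theorem covers. Even the sharper fibrewise version of your separation argument (observing that over a fixed horizontal point $w$ the twist $t(p')=-2r\sum_k(z_kp_{k+n}-z_{k+n}p_k)$ is the \emph{same} for two maps sharing the shift $z$, so one only needs $2r^2M<\tfrac1{N^2}$) does not close the gap: with $M\approx 1$ this still fails for $r$ near $\tfrac1N$. You correctly identify the order-$r$ twist as the crux, but the assertion that "the two bounds on $M$ can be met simultaneously in the admissible parameter range" is exactly the point that is false and cannot be waved through.

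The paper's construction is designed precisely to absorb this twist. It takes $R=\{q: q'\in Q,\ \f(q')\leq q_{2n+1}\leq\f(q')+1\}$, where $\f$ satisfies $\f(w)=r^2\f(\frac{w-z_j}{r})-2\sum_i(z_{j,i}w_{i+n}-z_{j,i+n}w_i)$ on each $Q_j$; such a $\f$ is produced as the fixed point of a contraction on $C(Q)$ with contraction factor $r^2$. The functional equation makes $\tau_{(z_j,0)}\delta_r(R)$ again a slab over the \emph{same} function $\f$, now of exact thickness $r^2$ over each horizontal point, so the vertical separation between consecutive blocks is $\tfrac1{N^2}-r^2>0$, which needs only $r<\tfrac1N$. (The explicit control $\|\f\|_\infty\leq 8nr$ from this construction is also used later, in Remark \ref{positive}, so replacing $R$ by a flat slab would additionally break the subsequent sign argument.) The surrounding scaffolding of your proposal --- $C_{r,N}\subset R$ via the decreasing Hutchinson iterates, the three-case disjointness analysis, and the appeal to Schief's theorem for $0<\ha^a(C_{r,N})<\infty$ and AD-regularity --- is sound; the missing ingredient is a set $R$ whose vertical fibres do not spread by an order-$r$ amount under the maps $S_j$.
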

\begin{proof} The proof is almost identical with that of Theorem 4.2 of \cite{CM} but we present it since later we shall need some of its components. Using an idea of Strichartz we show that there exists a
continuous function $\f : Q \ra \R$ such that the set
$$R=\{q \in \hn : q' \in Q \text{ and } \f (q') \leq q_{2n+1} \leq \f (q')+1\}$$
satisfies (i) and (ii).
This will follow if we find some continuous $\f : Q \ra
\R$ which satisfies for all $j=1,\dots,N^{2n}$,
\begin{equation}
\label{ss1} \tau_{(z_j,0)} \delta_r (R) = \{q \in \hn : q' \in Q_j
\text{ and } \f (q') \leq q_{2n+1} \leq \f (q')+r^2\},
\end{equation}
where $Q_j=\t_{(z_j,0)}(\delta_r(Q))$. If (\ref{ss1}) holds then it is readily seen that $R$ satisfies (i). In order to see that $R$ satisfies (ii) as well first notice that (\ref{ss1}) implies that for $j=iN^{2n}+1,\cdots,(i+1)N^{2n}$ and $i=0,\cdots, \frac{N^2}{2}-1$,
\begin{equation}
\label{ss1a}
\begin{split}
S_j(R)&=\t_{(z_{\lfloor j \rfloor_ {N^{2n}}},\frac{1}{2}+\frac{i}{N^2})} \delta_r (R) \\
&= \{q \in \hn : q' \in Q_{\lfloor j \rfloor_ {N^{2n}}} \text{ and } \f (q')+\frac{1}{2}+\frac{i}{N^2} \leq q_{2n+1} \leq \f (q')+\frac{1}{2}+\frac{i}{N^2}+r^2\}.
\end{split}
\end{equation}
Now let $j \neq k \in \{0,\dots, \frac{1}{2}N^{2n}+2\}$ and let $p \in S_j(R)$ and $q \in S_k(R)$. We need to show that $p \neq q$. If $\lfloor j \rfloor_ {N^{2n}} \neq \lfloor k \rfloor_ {N^{2n}}$ then $p' \in Q_{\lfloor j \rfloor_ {N^{2n}}}, \ q' \in Q_{\lfloor k \rfloor_ {N^{2n}}}$, therefore $p' \neq q'$, and so $p \neq q$. If $\lfloor j \rfloor_ {N^{2n}} = \lfloor k \rfloor_ {N^{2n}}$ and $j,k \neq 0$ (the case $jk=0$ is similar and simpler), then there exist $i \neq l \in \{0,\dots, \frac{N^2}{2}-1\}$ such that $j\in\{iN^{2n}+1,\cdots,(i+1)N^{2n}\}$ and $k\in\{lN^{2n}+1,\cdots,(l+1)N^{2n}\}$. Assume without loss of generality that $i>l$. If $p'=q'$ we have by (\ref{ss1a}), since $r <\frac{1}{N} <\frac{1}{2}$,
$$q_{2n+1}\leq \f (q')+\frac{1}{2}+\frac{l}{N^2}+r^2 < \f (q')+\frac{1}{2}+\frac{l+1}{N^2}\leq \f (p')+\frac{1}{2}+\frac{i}{N^2}\leq p_{2n+1}.$$
Hence $p \neq q$ and $S_j(R) \cap S_k(R)= \emptyset$. 

Since
\begin{equation*}
\begin{split}
\tau_{(z_j,0)} \delta_r (R) = \{p \in \hn : p' \in Q_j \text{ and } 
&r^2\f (\frac{p'-z_j}{r})-2\sum_{i=1}^{n}
(z_{j,i}p_{i+n}-z_{j,i+n}p_i) \leq p_{2n+1}\\
&\leq r^2\f (\frac{p'-z_j}{r})-2\sum_{i=1}^{n}
(z_{j,i}p_{i+n}-z_{j,i+n}p_i)+r^2 \},
\end{split}
\end{equation*}
proving (\ref{ss1}) amounts to showing that
\begin{equation}
\label{ss2} \f (w)= r^2\f (\frac{w-z_j}{r})-2\sum_{i=1}^{n}
(z_{j,i}w_{i+n}-z_{j,i+n}w_i) \text{ for }w \in Q_j,j=1,\dots,N^{2n}.
\end{equation}

As usual for any metric space $X$, denote $C(X)=\{f:X\ra \R\text{ and }f \ \text{is continuous}\}$. Let $B=\cup_{j=1}^{N^{2n}} Q_j$ and $L: C(B) \ra C(Q)$ be a linear
extension operator such that
\begin{equation*}
L (f) (x)= f(x) \text { for } x \in B
\end{equation*}
and
$$\|L(f)\|_{\infty}= \|f\|_{\infty}.$$

Since the $Q_j$'s are disjoint the operator $L$ can be defined
simply by taking $\e>0$ small enough and letting

\begin{equation*}
L(f)(x)=
\begin{cases} f(x) & \text{ when }x \in B,
\\
\dfrac{\e - \dist(x,B)}{\e} f (\tilde{x})&\text{ when
}0<\dist(x,B)<\e,
\\
0 & \text{ when } \dist(x,B) \geq \e,
\end{cases}
\end{equation*}
where $\tilde{x} \in B$ and $\dist(x,B)=d(x,\tilde{x}).$

Furthermore define the functions $h: B \ra \R $, $\tilde{f} :B \ra \R$,
$$h(w)=-2\sum_{i=1}^{n}
(z_{j,i}w_{i+n}-z_{j,i+n}w_i) \text{ for }w \in Q_j,$$
$$\tilde{f} (w)= r^2 f (\frac{w-z_j}{r}) \text { for }f \in C(Q),w \in Q_j ,$$
and the operator $T:C(B) \ra C(Q)$ as,
$$T(f)=L(\tilde{f} + h).$$
Then
$$T(f)(w)=r^2 f (\frac{w-z_j}{r})-2\sum_{i=1}^{n}
(z_{j,i}w_{i+n}-z_{j,i+n}w_i) \text{ for }w \in Q_j,$$ and for $f,g
\in C(B)$,
\begin{equation*}
\|Tf-Tg\|_\infty
=\|L(\tilde{f}-\tilde{g})\|_\infty=\|\tilde{f}-\tilde{g}\|_\infty
\leq r^2 \|f-g\|_\infty.
\end{equation*}
Hence $T$ is a contraction and it has a unique fixed point $\f$
which satisfies (\ref{ss2}). The remaining assertions follow from \cite{s}.
\end{proof}
\begin{rem}
\label{positive}
Notice that, by (\ref{ss1a}) in order for all $p \in C_{r,N}\setminus S_0(C_{r,N})$ to satisfy $p_{2n+1}>0$ it suffices to have,
\begin{equation}
\label{pos1}
\f(w) >-\frac{1}{2}\ \text{for all} \ w \in \bigcup_{j=1}^{N^{2n}}Q_j.
\end{equation}
For $w \in Q_j=\prod_{i=1}^{2n}[z_{j,i},z_{j,i}+r],j=1,..,N^{2n}$,
\begin{equation*}
\begin{split}|z_{j,i}w_{i+n}-z_{j,i+n}w_i|&=|z_{j,i}w_{i+n}-w_iw_{i+n}+w_iw_{i+n}-z_{j,i+n}w_i| \\
&\leq |(z_{j,i}-w_i)w_{i+n}|+|w_i(w_{i+n}-z_{j,i+n})| \leq 2r,
\end{split}
\end{equation*}
for all $i=1,...,n.$ 
Hence by (\ref{ss2}) it follows that,
$$|\f(w)| \leq r^2 \|\f\|_\infty + 2\sum_{i=1}^{n}|
z_{j,i}w_{i+n}-z_{j,i+n}w_i| \leq r^2 \|\f\|_\infty+4nr.$$
Therefore
$$\|\f\|_\infty \leq \frac{4nr}{1-r^2} \leq 8nr,$$ 
and (\ref{pos1}) is satisfied if $r < \frac{1}{16 n}.$
\end{rem}

\begin{rem}
\label{q-1dim}
Choose $r_0 < \frac{1}{16 n}$ such that $N_0=\frac{1}{r_0} \in 2 \N$ and consider the self similar sets $C_{r,N_0}, r<r_0$. Then for $r\in(0,r_0)$,
\begin{equation*}
\{\dim C_{r,N_0}:r \in (0,r_0)\} = \left(0,  \frac{\log(\frac{1}{2}N_0^{2n+2}+1)} {\log(N_0)}\right).
\end{equation*}
Furthermore
\begin{equation*}
\frac{\log(\frac{1}{2}N_0^{2n+2}+1)} {\log(N_0)} > \frac{\log(\frac{N_0}{2}) +\log(N_0^{2n+1})} {\log(N_0)}>2n+1.
\end{equation*}
Therefore there exists some $r_{Q-1}  < \frac{1}{N_0}$ such that
$$0< \mathcal{H}^{2n+1}(C_{r_{Q-1},N_0})< \infty.$$
We will denote $C_{r_{Q-1},N_0}$ by $C_{Q-1}$.
\end{rem}

\begin{thm}
\label{unbrem} The Cantor set $C_{Q-1}$ satisfies $0<\mathcal{H}^{Q-1}(C_{Q-1})<\infty$ and is removable.
\end{thm}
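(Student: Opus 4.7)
The measure-theoretic claim $0<\ha^{Q-1}(C_{Q-1})<\infty$ is already contained in Theorem~\ref{remcan} and Remark~\ref{q-1dim}, so the real content is removability. I argue by contradiction: assume that on some bounded domain $D\supset C:=C_{Q-1}$ there is a Lipschitz function $f:D\to\R$ with $\dh f=0$ in $D\setminus C$ and $\dh f\not\equiv 0$ in $D$, and derive a contradiction with Theorem~\ref{unb}.

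The first step is to apply Theorem~\ref{main}: on a neighborhood $G\supset C$ we may write $f=\G\ast\sigma+H$, with $\sigma=h\mu$, $\mu:=\ha^{Q-1}\lfloor C$, $\|h\|_{L^\infty(\mu)}\lesssim 1$ and $\|\anah H\|_\infty\lesssim 1$. Differentiating gives $T_\sigma=\anah f-\anah H$ on $G\setminus C$, where $T_\sigma$ is the vector-valued singular integral of Notation~\ref{singnot} with standard kernel $K=\anah\G$. Since $\anah f\in L^\infty$ by Proposition~\ref{panulip}, $T_\sigma\in L^\infty(\hn\setminus C)$. Because $|\sigma|\le\|h\|_\infty\mu$ inherits the $(Q-1)$-AD regularity of $\mu$, Lemma~\ref{semmes} applied to each coordinate upgrades this to $T^*_\sigma\in L^\infty(\hn)$.

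The second step verifies the hypothesis of Theorem~\ref{unb} at the origin, which is the fixed point of $S_0=\delta_{r_{Q-1}}$. With the one-letter word $w=(0)$ one has $C_w=S_0(C)$ and $C\setminus C_w=\bigcup_{j\ge 1}S_j(C)$. Remark~\ref{positive} ensures $y_{2n+1}>0$ throughout this union, while $y_i\in[0,1]$ for $i=1,\dots,2n$ by Definition~\ref{similitudes}. For the component kernel
\begin{equation*}
K_{n+1}(y^{-1})\;=\;\frac{\o_{n+1}(y^{-1})}{\|y\|^{Q-1}}\;=\;-\,c_Q\,\frac{y_{n+1}|y'|^2+y_1\,y_{2n+1}}{\|y\|^{Q+2}},
\end{equation*}
the integrand is non-positive on $C\setminus C_w$ and strictly negative on those cylinders $S_j(C)$ for which $z_{j,1}>0$, a set of positive $\mu$-measure. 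Hence $\int_{C\setminus C_w}K_{n+1}(y^{-1})\,d\mu y=\eta\neq 0$, and Theorem~\ref{unb}, applied to the kernel $(p,q)\mapsto K_{n+1}(q^{-1}\cdot p)$ (which fits Definition~\ref{hom} via $K_{\tilde\o_{n+1}}$ with $\tilde\o_{n+1}(x):=\o_{n+1}(x^{-1})$), yields $\|T^*_{K_{n+1}}(1)\|_{L^\infty(\mu)}=\infty$.

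The final step derives the contradiction and is the main obstacle. The plan is to decompose $C$ into the shells $C_{w^k}\setminus C_{w^{k+1}}=\delta_{r_{Q-1}^k}(C\setminus C_w)$ and evaluate $T^\e_\sigma(0)$ by summing over those shells lying outside $B(0,\e)$. The change of variables $q=\delta_{r_{Q-1}^k}(z)$ cancels the scaling coming from the $(Q-1)$-homogeneity of $K$ against the homogeneity of $\mu$, exactly as in the proof of Theorem~\ref{unb}, leaving each shell's contribution in the $(n+1)$-th component as
\begin{equation*}
\int_{C\setminus C_w}K_{n+1}(z^{-1})\,h(\delta_{r_{Q-1}^k}(z))\,d\mu z.
\end{equation*}
As $\e\to 0$ an unboundedly growing number of such terms must be summed, while step~1 forces the total to stay uniformly bounded. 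If $h$ is not $\mu$-a.e.\ zero on $C$, a Lebesgue-differentiation / tangent-measure argument at the origin, exploiting the self-similarity $\delta_{r_{Q-1}}(\mu\lfloor C)=r_{Q-1}^{Q-1}(\mu\lfloor S_0(C))$, should produce a subsequence of $k$'s along which the above integrals stay bounded away from $0$ with a common sign, forcing divergence of the partial sums. This reduction is the delicate point, since the origin is a single point of $C$ and $h\in L^\infty$ need not have a pointwise value there; one must either replace $0$ by a fixed point of some higher iterate $S_v$ that is simultaneously a density point of $h$ with non-zero value, or pass to a weak-$*$ tangent measure $\tilde\sigma$ of $\sigma$ at $0$, verify $T^*_{\tilde\sigma}\in L^\infty$ by scale-invariance, and use the $\delta_{r_{Q-1}}$-invariance combined with step~2 to force $\tilde\sigma=0$. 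Once $h\equiv 0$ is established, $f=H$ near $C$, so $\dh f=0$ throughout $D$, contradicting the choice of $f$.
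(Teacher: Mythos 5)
Your first two steps are exactly the paper's: Theorem \ref{main} gives $f=\Gamma\ast\sigma+H$ with $\sigma=h\mu$, $\mu=\ha^{Q-1}\lfloor C_{Q-1}$, Lemma \ref{semmes} upgrades $T_\sigma\in L^\infty$ to $T^*_\sigma\in L^\infty$, and the sign computation for $K_{n+1}(q^{-1})$ on $C_{Q-1}\setminus S_0(C_{Q-1})$ (using Remark \ref{positive}) verifies the hypothesis of Theorem \ref{unb} at the fixed point $0$. The gap is in your third step, and it is a real one: you try to extract the contradiction by evaluating $T^\e_\sigma(0)$ over the shells $C_{w^k}\setminus C_{w^{k+1}}$, but nothing forces $h$ to be nonzero near the origin. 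The density $h$ could vanish identically on $S_0(C_{Q-1})$, in which case every shell contribution you write down is eventually zero and no divergence occurs; ``$h$ not $\mu$-a.e.\ zero'' gives you no information at the single point $0$. Your two proposed repairs do not close this: the fixed points of the IFS form a countable, hence $\mu$-null, set, so you cannot in general find one that is simultaneously a point of approximate continuity of $h$ with nonzero value (and the sign-definiteness argument is anyway tied to the fixed point $0$); and a tangent measure of $\sigma$ at the single point $0$ being zero is perfectly consistent with $\sigma\neq 0$, so forcing $\tilde\sigma=0$ yields no contradiction.

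The paper resolves this by decoupling the two roles you have merged at the origin. It picks a point $p$ of approximate continuity of $h$ with $h(p)\neq 0$ (such points have full measure in $\{h\neq 0\}$, which has positive $\mu$-measure since $f$ is not harmonic), and blows up $\sigma$ along the cylinders $S_{w_k}(C_{Q-1})\ni p$: the rescaled measures $r^{(1-Q)k}(S_{w_k}^{-1})_\sharp(\sigma\lfloor S_{w_k}(C_{Q-1}))$ converge weakly to $h(p)\,\mu$. Combining this convergence with the boundedness of $T^*_\sigma$ and Lemma \ref{compop}, the $L^\infty$ bound is transferred to $T_{\mu}$ off a Lebesgue-null set, hence by Lemma \ref{semmes} to $T^*_{\mu}$. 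Only then is Theorem \ref{unb} invoked --- for the measure $\mu$ itself, at the fixed point $0$, where your sign computation applies --- giving $\|T^*_{K_{n+1}}(1)\|_{L^\infty(\mu)}=\infty$ and the contradiction. The missing idea is precisely this transfer of boundedness from $\sigma=h\mu$ to $\mu$ via a blow-up at a Lebesgue point of $h$, which allows the unboundedness criterion to be checked at the origin independently of where $h$ is supported.
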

\begin{proof}
Suppose that $C_{Q-1}$ is not removable. Then there exists a domain $D\supset C_{Q-1}$ and a Lipschitz function $f:D \ra R$ which is $\dh$-harmonic in $D\stm C_{Q-1}$ but not in $D$. By Theorem \ref{main} there exists a domain $G,C_{Q-1} \subset G \subset D$, a Borel function $h:C \ra \R$ and a $\dh$-harmonic function $H: G \ra \R$ such that 
$$f(p)=\int_{C_{Q-1}} \G(q^{-1}\cdot p)h(q) d \ha^{Q-1}q+H(p) \ \text{for} \ p \in G \stm C_{Q-1}$$
and $\|h\|_{L^\infty (\ha^{Q-1} \lfloor C_{Q-1})}+\|\anah H \|_\infty \lesssim 1$. Let $\s=h \ha^{Q-1} \lfloor C_{Q-1}$. In this case by the left invariance of $\anah$ as in (\ref{im1}) and recalling Notation \ref{singnot}
$$T_\s (p)=\anah f(p)- \anah H(p) \ \text{for all} \ p \in G\stm C_{Q-1}$$ 
which implies that
\begin{equation}
\label{tsb1}
|T_\s (p)| \lesssim 1 \ \text{ for all } \ p \in G\stm C_{Q-1}. 
\end{equation}
Let $\delta=\dist (C_{Q-1}, \hn \stm G )>0$. Then for $p \in \hn \stm G$,
\begin{equation}
\label{tsb2}
|T_\s (p)|\lesssim \int \frac{1}{\|q^{-1} \cdot p\|^{Q-1}} d |\s| q \leq  \frac{|\s|(C_{Q-1})}{\delta^{Q-1}} \lesssim 1. 
\end{equation} 
By (\ref{tsb1}) and (\ref{tsb2}) we deduce that $T_\s \in L^\infty$. Hence, recalling  Theorem \ref{remcan}, since the measure $\ha^{Q-1} \lfloor C_{Q-1}$ is $(Q-1)$-AD regular we can apply Lemma \ref{semmes} and conclude that $T^\ast_\s$ is bounded. Furthermore since $f$ is not harmonic in $C_{Q-1}$, $h \neq 0$ in a set of positive $\ha^{Q-1}$ measure. Therefore there exists a point $p \in C_{Q-1}$ of approximate continuity (with respect to $\ha^{Q-1} \lfloor C_{Q-1}$) of $h$ such that $h(p) \neq 0$. Recalling that $C_{r_{Q-1},N_0}:=C_{Q-1}$ and Definition \ref{similitudes} let $w_k \in \{0,\dots,\frac{1}{2} N^{2n+2}\}^k$ be such that $p \in S_{w_k} (C_{Q-1})$. Then by the approximate continuity of $h$,
$$r^{(1-Q)k}(S_{w_k}^{-1})_\sharp (\s \lfloor S_{w_k}(C_{Q-1})) \rightharpoonup h(p)\ha^{Q-1} \lfloor C_{Q-1} \text{ as }  k \ra \infty,$$
and the boundedness of $T^\ast_\s$ implies that $T^\ast_{\ha^{Q-1} \lfloor C_{Q-1}}$ is bounded. To see this let $z \in \hn \stm (C_{Q-1}\cup \bigcup_{k=1}^\infty S^{-1}_{w_k}(C_{Q-1}))$. If $\dist(z,C_{Q-1})> \frac{\alpha_{C_{Q-1}}}{2}\diam(C_{Q-1})$, then
\begin{equation}
|T_{\ha^{Q-1} \lfloor C_{Q-1}}(z)|\lesssim 1. 
\end{equation} 
Therefore we can assume that $\dist(z,C_{Q-1})\leq \frac{\alpha_{C_{Q-1}}}{2}\diam(C_{Q-1})$. Recalling Remark \ref{q-1dim} this implies that for any $w \in \mathcal{I}$,
\begin{equation}
\label{dissw}
\begin{split}
\dist (S_w (z), S_w(C_{Q-1}))&=r_{Q-1}^{|w|}\dist(z,C_{Q-1}) \\
&\leq r_{Q-1}^{|w|}\frac{\alpha_{C_{Q-1}}}{2} \diam (C_{Q-1})=\frac{\alpha_{C_{Q-1}}}{2} \diam (S_w(C_{Q-1})).
\end{split}
\end{equation}
Notice that the homogeneity of $K$  implies that $K(S^{-1}_{w_k}(q)^{-1}\cdot z)=r^{(Q-1)k}K(q^{-1}\cdot S_{w_k}(z))$ as in the proof of Theorem \ref{unb}. Therefore by (\ref{homome}), 
\begin{equation*}
\begin{split}
h(p)T_{\ha^{Q-1} \lfloor C_{Q-1}}(z)&=\lim_{k \ra \infty} r^{(1-Q)k}\int K(q^{-1}\cdot z)d(S_{w_k}^{-1})_\sharp (\s \lfloor S_{w_k}(C_{Q-1}))q \\
&=\lim_{k \ra \infty} r^{(1-Q)k}\int_{S_{w_k}(C_{Q-1})} K(S^{-1}_{w_k}(q)^{-1}\cdot z) d \s q\\
&=\lim_{k \ra \infty} \int_{S_{w_k}(C_{Q-1})} K(q^{-1}\cdot S_{w_k}(z)) d \s q \\
&=\lim_{k \ra \infty} \left(\int_{C_{Q-1}} K(q^{-1}\cdot S_{w_k}(z)) d \s q-\int_{C_{Q-1} \stm S_{w_k}(C_{Q-1})} K(q^{-1}\cdot S_{w_k}(z)) d \s q\right).
\end{split}
\end{equation*}
Since $z \notin \bigcup_{k=1}^\infty S^{-1}_{w_k}(C_{Q-1})$,
\begin{equation*}
\left|\int_{C_{Q-1}} K(q^{-1}\cdot S_{w_k}(z)) d \s q\right| \leq \|T^\ast_\s\|_\infty.
\end{equation*}
Furthermore by Lemma \ref{compop} and (\ref{dissw}) we get that,
\begin{equation*}
\left|\int_{C_{Q-1} \stm S_{w_k}(C_{Q-1})} K(q^{-1}\cdot S_{w_k}(z)) d \s q\right| \leq 2 \|T^\ast_\s\|_\infty+A_{C_{Q-1}}.
\end{equation*}
Therefore,
$$|h(p) T_{\ha^{Q-1} \lfloor C_{Q-1}}(z)| \leq 3 \|T^\ast_\s\|_\infty+A_{C_{Q-1}},$$
and since $$\mathcal{L}^{2n+1}\left(C_{Q-1}\cup \bigcup_{k=1}^\infty S^{-1}_{w_k}(C_{Q-1})\right)=0$$ we get that $T_{\ha^{Q-1} \lfloor C_{Q-1}} \in L^\infty$. Hence by Lemma \ref{semmes} $T^\ast_{\ha^{Q-1} \lfloor C_{Q-1}}$ is bounded.

On the other hand notice that
\begin{equation}
\begin{split}
\label{lastestim}
\int_{C_{Q-1} \stm S_0 (C_{Q-1})}& K_{1+n} (q^{-1} ) d \ha^{Q-1} q \\
&=\int_{C_{Q-1} \stm S_0 (C_{Q-1})} c_Q\frac{(-q_{1+n})|q'|^2-(-q_{1})(-q_{2n+1})}{\|q\|^{Q+2}}d \ha^{Q-1} q\\
&=-\int_{C_{Q-1} \stm S_0 (C_{Q-1})} c_Q\frac{q_{1+n}|q'|^2+q_{1}q_{2n+1}}{\|q\|^{Q+2}}d \ha^{Q-1} q. 
\end{split}
\end{equation}
Recalling Definition \ref{similitudes} for $q \in C_{Q-1} \stm S_0 (C_{Q-1})$, $q_{1+n},q_1 \in [0,1]\stm[0,r_{Q-1}]$ and by Remark \ref{positive} we also have that $q_{2n+1}>0$. Hence  $q_{1+n}|q'|^2+q_{1}q_{2n+1}>0$ for $q \in C_{Q-1} \stm S_0 (C_{Q-1})$ and by (\ref{lastestim})
$$ \int_{C_{Q-1} \stm S_0 (C_{Q-1})} K_{1+n} (q^{-1} ) d \ha^{Q-1} q \neq 0.$$
Therefore, by Theorem \ref{unb} (recall the definition of fixed points of a family similarities given before it), since $0$ is a fixed point for $\mathcal{S}_{r_{Q-1},N_0}$, more precisely $S_0(0)=0$, $T^\ast_{K_{n+1}} (\ha^{Q-1} \lfloor C_{Q-1})$ and hence $T^\ast_{\ha^{Q-1} \lfloor C_{Q-1}}$ is unbounded. We have reached a contradiction and the theorem is 
proven.
\end{proof}
   
\section{Concluding comments and questions}
Here we shall discuss some questions that are left unanswered, or even not considered at all, so far.

What $(Q-1)$-dimensional subsets of $\hn$ are not removable? The proof of Theorem \ref{unbrem} uses the special structure of 
$C_{Q-1}$ only at the end to check the condition of Theorem \ref{unb}. It is quite likely that the cases of self-similar 
sets where this condition fails are quite exceptional, but checking it could be technically very complicated. In our 
case we set up the example so that the integrand doesn't change sign, but even for the sets considered in \cite{CM} one would 
need to compare carefully the positive and negative contributions. Note also that there are actually infinitely many 
sufficient conditions for the unboundedness in Theorem \ref{unb} corresponding to the dense set of fixed points.

The related question is on what $(Q-1)$-dimensional subsets of $\hn$ the singular integral operator related to the kernel 
$K=\anah \Gamma$ can be $L^2$-bounded. Or on what $m$-dimensional subsets of $\hn$ the singular integral operators related 
to appropriate $m$-homogeneous kernels can be $L^2$-bounded. As mentioned in the introduction essentially complete results 
are only known for the Cauchy kernel in the complex plane (or also for the Riesz kernel $|x|^{-2}x$ in $\R^n$). For 
$m$-dimensional Ahlfors-David-regular sets $E$ and $m$-homogeneous Riesz kernels in $\R^n$ we know that the 
$L^2$-boundedness 
implies that $m$ must be an integer, \cite{V}, and $E$ must be well approximated by $m$-planes almost everywhere at 
some arbitrarily small scales, \cite{mpa}, \cite{M4}. 
Similar results were proved for Riesz-type kernels in $\hn$ in \cite{CM}. A property of these 
kernels $R$ that was crucial for the proofs is that $R(x)=-R(y)$ if and only if $x=-y$. Obtaining similar results 
even for the simple kernel  $z^3 /|z|^4$ in $\C$ does not seem to be trivial, and far less for the kernel $K=\anah \Gamma$
in $\hn$. 

We have not studied here the converse: what regularity properties of the underlying sets guarantee the $L^2$-boundedness 
of the singular integral operators and the non-removability of such sets? 
In $\R^n$ this is well understood by the results of 
David and Semmes, see \cite{DS}. They have proved that a large class of singular integral operators are $L^2$-bounded 
on uniformly rectifiable sets (which include Lipschitz graphs, for example), and this is essentially the best one can say. 
It follows that compact subsets $C$ of $(n-1)$-dimensional uniformly rectifiable sets with  $\ha^{n-1}(C)>0$ 
are not removable for Lipschitz harmonic functions in $\R^n$. In $\hn$ it would be natural to start asking what smoothness properties of surfaces guarantee the $L^2$-boundedness of various singular integral operators? An extensive study of  surfaces in $\hn$ is performed in \cite{fss1}. The horizontal surfaces of \cite{fss1}, being essentially Euclidean, should be easier to handle than the vertical ones. As in Remark \ref{verplane}, the 
general results in \cite{Ste} can be used in vertical subgroups. In particular, 
 the subsets of positive measure of vertical hyperplanes are not removable for Lipschitz harmonic functions.

Our final comment is actually irrelevant for this paper, but we would like to straighten one item of \cite{CM}. As 
observed by Enrico Le Donne, the proof of Lemma 2.11 in \cite{CM} is too complicated and the question stated in 
Remark 2.12 has a positive answer. This was also proved and used in a different setting in [AKL].

\end{document}